\def\cvd{~\vbox{\hrule\hbox{%
     \vrule height1.3ex\hskip0.8ex\vrule}\hrule } }
\newtheorem{thm}{Theorem}[section]
\newtheorem{conj}[thm]{Conjecture}
\newtheorem{obs}[thm]{Observation}
\newtheorem{defn}[thm]{Definition}
\def\mvr{{\rm mvr}}
\def\mvr{{\rm mvr}}
\def\mr{{\rm mr}}
\def\be{\begin{equation}}
\def\ee{\end{equation}}
\def\bee{\[}
\def\eee{\]}
\def\bra{\langle}
\def\ket{\rangle}
\newcommand{\reals}{\mathbb{R}}
\newcommand{\complex}{\mathbb{C}}
\begin{document}

%  Leave these commented lines here 
% \input{elaheader-volx-xx.tex}
% \setcounter{page}{1}

% \renewcommand{\thefootnote}{\fnsymbol{footnote}}
% \renewcommand{\thefootnote}{\arabic{footnote}}
% \renewcommand{\theequation}{\thesection.\arabic{equation}}

\bibliographystyle{plain}

\title{Minimum Vector Rank and Complement Critical Graphs 
\thanks{Received by the editors on Month x, 200x.
Accepted for publication on Month y, 200y   Handling Editor: .}}
% Leave blank; editors will write the exact dates above

\author{
Xiaowei Li\thanks{Department of Mathematics and Computer Science, Saint Mary's College of California, 
Moraga, CA 94556}
% Remember to put \and between any two authors
\and
Michael Nathanson\footnotemark[2]~\thanks{Corresponding author: man6@stmarys-ca.edu}
\and 
Rachel Phillips\thanks{Graduate Program in Analytics, University of San Francisco, San Francisco, CA 94117} \newline All authors supported by the Saint Mary's School of Science Summer Research Program}

% Note that \footnotemark[3]} is used for the third author
% because of the same affiliation for the second and third authors.
% If the same affiliation is to be used for the first and second authors,
% \footnotemark[2] should be used instead of \thanks{} for the second author.

% Authors and running title to go on top of each page
\pagestyle{myheadings}
\markboth{X.\ Li, M.\ Nathanson, and R.\ Phillips}{Minimum Vector Rank and Complement Critical Graphs}
\maketitle

\begin{abstract}
Given a graph $G$, a real orthogonal representation of  $G$ is a function from its set of vertices  to $\reals^d$ such that two vertices are mapped to orthogonal vectors if and only if they are not neighbors. The minimum vector rank of a graph is the smallest dimension $d$ for which such a representation exists. This quantity is closely related to the minimum semidefinite rank of $G$, which has been widely studied. Considering the minimum vector rank as an analogue of the chromatic number, this work defines critical graphs as those for which the removal of any vertex decreases the minimum vector rank; and complement critical graphs as those for which the removal of any vertex decreases the minimum vector rank of either the graph or its complement. It establishes necessary and sufficient conditions for certain classes of graphs to be complement critical, in the process calculating their minimum vector rank. In addition, this work demonstrates that complement critical graphs form a sufficient set to prove the Graph Complement Conjecture, which remains open. 
\end{abstract}

\begin{keywords}
Minimum vector rank, Minimum semidefinite rank, Orthogonal representation, Complement critical graph, Graph Complement Conjecture. 
\end{keywords}
\begin{AMS}
05C50, 15A03, 15A18.
\end{AMS}

%%%%%%%%%%%%%%%%%%%%%%%%%%%%%%%%%%%%%%%%%%%%%%%%%%%%%%%%%%%%%

\section{Introduction}

There is an extensive, rich literature connecting ideas in linear algebra with those in graph theory. In particular, the association of a graph with a matrix (or a class of matrices) allows properties of the graph to be encoded with algebraic structure. The minimum rank of a graph is the smallest rank among all Hermitian matrices whose zero pattern away from the diagonal is the same as the zero pattern of the graph's adjacency matrix. The study of such graph parameters was initiated in \cite{Nylen} and pursued during a 2006 American Institute of Mathematics workshop \cite{AIMW}, generating much interest since then (e.g., \cite{ZFS, ZFP, Outerplanar, GCC, OMR, MSR,OR, UM}). See \cite{AS, SII} for thorough surveys of earlier results and lines of inquiry. 

The present work was inspired by the work of Haynes, et al. \cite{OVC}, which looked at orthogonal representations of graphs as an analogue of graph coloring, a strategy which we will employ here. In particular, we will look for the smallest dimension in which we can embed an orthogonal representation of a  graph. One possible interpretation of such a representation is as a set of quantum states, necessarily unit vectors, in which case the associated graph $G$ is the confusability graph of these states \cite{DSW, QCN}. We will not pursue this interpretation here, but it motivates our desire to focus on representations consisting of nonzero vectors. By contrast, much of the literature focuses on the relation between graphs and the rank of their associated matrices (e.g., \cite{ZFP,OMR,SII}), which does not explicitly give a ``coloring'' of a graph using vectors. The ideas are closely related, however, and the differences are mostly linguistic.

The main theme of our current work is complement criticality. Recall that a critical graph is one such that any proper subgraph has a strictly smaller chromatic number. We wish to look at analogous graphs with respect to the minimal vector rank, defining vector-critical graphs to be those such that any proper {\em induced} subgraph has a strictly smaller minimum vector rank. In addition, it will also be interesting to look at {\em complement critical graphs}, in which for any proper induced subgraph, either the subgraph or its complement has a strictly smaller minimum vector rank.  That is, if $H$ is a proper induced subgraph of $G$, then necessarily  $\mvr(H)+\mvr(\overline{H}) < \mvr(G)+\mvr(\overline{G})$.

On question posed during the 2006 AIM workshop has gained much attention: How large can the sum of the minimum rank of a graph and its complement be? This question (and its conjectured answer) has become known as the Graph Complement Conjecture. We will state this conjecture in Section \ref{MVR Overview} and show its connections to our current work. 

The rest of the paper is organized as follows. Section \ref{MVR Overview} introduces basic definitions and properties of the minimum vector rank. In Section \ref{Results}, we give a precise definition of complement critical graphs and explore its immediate consequences. Section \ref{Complements} establishes formulas for the minimum vector rank of the complement of certain sparse graphs, which allows us in Section \ref{Families} to give necessary and sufficient conditions for certain types of graphs to be complement critical. Finally, in Section \ref{MRplus} we briefly restate our results in terms of the minimum semidefinite rank, to illuminate the connection to other work. There is also an Appendix, in which we include some of the more technical proofs from Section \ref{Complements}.

%%%%%%%%%%%%%%%%%%%%%%%%%%%%%%%%%%%%%%%%%%%%%%%%%%%%%%%%%%%%%

\section{The Minimum Vector Rank of a Graph}\label{MVR Overview}
We recall some standard definitions: A {\em graph} is a pair $G=(V,E)$, where $V$ a finite non-empty set of vertices and $E \subset V \times V$ is the set of edges. We will restrict our focus to {\em simple}  graphs, in which edges are undirected and no loops or multiple edges are allowed. The {\em order} of a graph $G$, denoted $|G|$, is the size of its vertex set. Two vertices $u$ and $v$ in $V$ are {\em neighbors} if and only if $uv \in E$; the neighborhood of $v$ is denoted $N(v) := \{u \in V: uv \in E \}$. The {\em degree} of a vertex is its number of neighbors: $\deg(v) = |N(v)|$. A {\em pendant} vertex has exactly one neighbor, while an {\em isolated} vertex has no neighbors. Two vertices $u$ and $v$ are  {\em duplicate} vertices if they are neighbors and if they share all other neighbors. That is, $N(u) \cup \{u\}= N(v) \cup \{v\}$. 

Building on \cite{AIMW} and works which have followed, we consider the {\em minimum rank} of a graph $G$ on $n$ vertices (denoted $\mr(G)$) as the minimum rank among $n \times n$ Hermitian matrices $M$ such that for all $i \ne j$, $M_{ij} \ne 0 $ if and only if vertices $i$ and $j$ are connected by an edge. (This is equivalent to the fact that the off-diagonal entries of $M$ have the same pattern of zeroes as the adjacency matrix of $G$.) The  {\em minimum semidefinite rank} $\mr_+(G)$ is similarly defined with the additional requirement that $M$ be a positive semidefinite matrix; this is also sometimes indicated by the alternate notation $\mbox{msr}(G)$ \cite{UM}. 

In this work, we wish to assign a nonzero vector to every vertex. This is a well-studied idea with, unfortunately, inconsistent notation and terminology. We will use the following definition: 
\begin{defn}\rm Let $G = (V,E)$ be a simple graph. An {\em orthogonal vector representation} of $G$ is a function 
\bee
\phi: V \rightarrow \reals^d \mbox{ such that } \left\{ \begin{array}{lll}
\bra \phi(v_i), \phi(v_i)\ket &>& 0  \mbox{ for all } v_i \in V \cr
\bra \phi(v_i), \phi(v_j)\ket &= &0 \mbox{ if } i \ne j \mbox{ and } v_iv_j \notin E \cr
\bra \phi(v_i), \phi(v_j)\ket   &\ne & 0 \mbox{ otherwise}\end{array}\right.
\eee
\end{defn}
We interpret this as an analogue of vertex coloring in the sense of assigning a vector to each vertex with conditions on adjacent vertices. This is actually a stronger condition than coloring, as it is restricted both by a vertex's neighbors and by its non-neighbors. Note also that our orthogonal representation is defined as mapping to a {\em real} vector space; and, in fact, there exist graphs for which there exists an orthogonal vector representation in $\complex^3$ but not in $\reals^3$.\cite{ZFP} However, all of the results in this paper hold whether the underlying field is $\reals$ or $\complex$. 

We note that in other settings, Orthogonal Vector Representations allow some vertices to be assigned the zero vector. We explicitly disallow this here; this means that we can always renormalize any representation to make all vectors unit vectors. These vector representations give us our graph parameter of primary interest for this paper, in which we follow the definition given in \cite{UM}: 
\begin{defn}\rm
The {\em minimum vector rank} of a graph $G$ is the smallest integer $d$ such that there exists an orthogonal vector representation of $G$ in $\reals^d$. We denote this as $\mvr(G)$. 
\end{defn}

The minimum vector rank is closely related to the minimum semidefinite rank described above. To make this connection with other parameters explicit, we offer the following observations:
\begin{obs}
For a simple graph $G$, 
\begin{enumerate}
\item{} $\mvr(G) \ge mr_+(G)$, and $\mvr(G) = \mr_+(G)$ unless $G$ contains isolated vertices. 

In fact, if $G$ contains $r$ isolated vertices, then $r = \mvr(G) - \mr_+(G)$. \cite{UM}
\item{} $\mvr(G) = \rm{jmr}_+(G):= \mr_+(G \vee K_1)$, as  in \cite{AS}. 
\item{} $\mvr(G) \ge \chi_v(\overline{G})$, defined in \cite{OVC}. 
\end{enumerate}
\end{obs}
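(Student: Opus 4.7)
The plan is to work throughout with the standard dictionary between orthogonal vector representations $\{\phi(v_i)\} \subset \reals^{d}$ and positive semidefinite $n\times n$ Gram matrices $M$ with $M_{ij} = \bra \phi(v_i),\phi(v_j)\ket$, noting that such $M$ has rank equal to the dimension of the span of the $\phi(v_i)$, and conversely that any PSD matrix $M$ of rank $d$ factors as $V^{\top}V$ with columns in $\reals^{d}$. All three parts then reduce to comparing how these two viewpoints handle the nonzero-vector requirement.

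For part 1, the Gram construction gives $\mr_+(G)\le \mvr(G)$ immediately, since $M_{ij}=\bra \phi(v_i),\phi(v_j)\ket$ has exactly the required off-diagonal zero pattern. Conversely, factoring an optimal PSD realization of $\mr_+(G)$ produces column vectors that satisfy all inner-product conditions of an orthogonal representation, except that some columns may be zero. Using the PSD inequality $|M_{ij}|^{2}\le M_{ii}M_{jj}$, a zero diagonal entry forces the entire row to vanish, so only isolated vertices can correspond to zero columns. Each such isolated vertex must then be assigned its own nonzero direction orthogonal to the existing configuration, and these directions can neither be shared with one another nor drawn from the existing span, so exactly $r$ new dimensions are required. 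This simultaneously yields the inequality and the exact gap of $r$.

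For part 2, given an optimal orthogonal vector representation of $G$, I would append a vector $w$ chosen so that $\bra w,\phi(v_i)\ket\ne 0$ for every $i$; such a $w$ exists because the forbidden set is a finite union of hyperplanes in $\reals^{\mvr(G)}$. The Gram matrix of the extended configuration realizes the zero pattern of $G\vee K_1$, giving $\mr_+(G\vee K_1)\le \mvr(G)$. Conversely, in any factorization of an optimal PSD matrix for $G\vee K_1$, the cone vertex of $K_1$ is adjacent to every vertex of $G$, so its vector has nonzero inner product with each other vector; this forces every vector in the configuration to be nonzero, and restricting to $V(G)$ produces an orthogonal vector representation of $G$. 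For part 3, I would unpack the definition of $\chi_v$ from \cite{OVC}: since a pair of vertices is adjacent in $\overline{G}$ precisely when it is nonadjacent in $G$, the orthogonality conditions of an orthogonal vector representation of $G$ coincide with the defining conditions of a $\chi_v$-assignment on $\overline{G}$, so any such representation in dimension $\mvr(G)$ is automatically such an assignment and the inequality follows.

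The only step I expect to require genuine care is the equality portion of part 1, where I must verify that distinct isolated vertices cannot share an extra dimension and that these extra dimensions must be linearly independent from the span of the nonisolated vertices' vectors; this reduces to the observation that a nonzero vector orthogonal to every other vector in a configuration is automatically linearly independent from the span of the nonzero vectors already present.
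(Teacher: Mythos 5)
Your proposal is correct, but it is worth noting that the paper does not actually prove this observation at all: it is stated as a collection of known facts with citations to \cite{UM}, \cite{AS}, and \cite{OVC}, so your Gram-matrix argument is supplying the standard proof that the paper leaves to the references. Your dictionary between orthogonal vector representations and PSD Gram factorizations is exactly the right mechanism, and parts 2 and 3 are handled cleanly (the hyperplane-avoidance argument for the cone vector, and the fact that the cone vertex forces every vector in a realization of $G \vee K_1$ to be nonzero, are precisely what make $\mvr(G) = \mr_+(G \vee K_1)$ work). Two small points of precision. In part 1, the construction you describe --- factoring an optimal $\mr_+(G)$ realization and appending $r$ fresh orthogonal directions for the zero columns --- only gives the upper bound $\mvr(G) \le \mr_+(G) + r$; the lower bound requires arguing about an \emph{arbitrary} optimal orthogonal representation of $G$, namely that the non-isolated vertices' vectors span a subspace of dimension at least $\mr_+(G)$ (their Gram matrix is a PSD realization of $G$ minus its isolated vertices, and isolated vertices contribute nothing to $\mr_+$), while the $r$ isolated vertices receive mutually orthogonal nonzero vectors in the orthogonal complement of that span. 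Your closing remark states exactly the linear-algebra fact needed, so the idea is present, but the argument should be explicitly run in that direction. In part 3, the conditions do not ``coincide'': an orthogonal vector representation of $G$ satisfies strictly more constraints than a $\chi_v$-coloring of $\overline{G}$ (it also forbids orthogonality between vertices adjacent in $G$), which is precisely why the conclusion is an inequality rather than an equality; your final sentence draws the right conclusion, so this is only a wording issue.
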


We can easily adapt known properties of the minimum semidefinite rank (from \cite{AS}, \cite{OVC}, et al.)  to give properties of the minimum vector rank. Recall that if two graphs $G_1 = (V_1, E_1)$ and $G_2 = (V_2, E_2)$ have disjoint sets of vertices, then their {\em union} $G_1 \cup G_2$ has vertices $V = V_1 \cup V_2$ and edges $E = E_1 \cup E_2$. If $G_1$ and $G_2$ are connected graphs, then they are the connected components of $G_1 \cup G_2$. Similarly, $G_1 \vee G_2$ is called  the {\em join} of $G_1$ and $G_2$ and contains all of the  edges in $G_1 \cup G_2$ plus additional edges connecting every vertex in $G_1$ to every vertex in $G_2$. We note that for any nonempty graphs $G_1, G_2$, the graph $G_1 \vee G_2$ is connected and $\overline{G_1 \vee G_2} = \overline{G_1} \cup \overline{G_2}$. 

\begin{obs}
Let $G$ be a graph on $n \ge 1$ vertices.
\begin{enumerate}
\item If H an induced subgraph of G then, $\mvr(H)\leq \mvr(G)$. 
\item{} If $G = G_1 \cup G_2$, then $\mvr(G) = \mvr(G_1) + \mvr(G_2)$. 
\item{} If $G = G_1 \vee G_2$, then $\mvr(G) = \max( \mvr(G_1),  \mvr(G_2))$. 
\item In particular, if $K_n$, is the complete graph on $n$ vertices, then $\mvr(K_{n})=1$ and $\mvr(\overline{K_n}) = n$. 
%\item If $G_{1} = (V,E_1)$ and $G_{2} = (V,E_2)$ are graphs on the same vertex set, then $\mvr(G_{1}\cap G_{2})\leq \mvr(G_{1})\mvr(G_{2})$, where $G_{1}\cap G_{2} := (V, E_1 \cap E_2)$. 
%\item In particular:  $|G| \leq \mvr(G)\mvr(\overline{G})$
\end{enumerate}
\end{obs}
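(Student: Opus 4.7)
The plan is to treat the four items in order of increasing difficulty, with only the upper bound for the join requiring genuine new work. For part 1, I take any optimal orthogonal vector representation $\phi: V(G) \to \reals^{\mvr(G)}$ and restrict it to $V(H)$; because $H$ is induced, the adjacency and non-adjacency relations among its vertices are inherited from $G$, so every required inner-product condition continues to hold, witnessing $\mvr(H) \le \mvr(G)$. Part 4 is settled by direct constructions: for $K_n$, sending every vertex to the scalar $1 \in \reals$ gives all pairwise inner products equal to $1 \ne 0$, so $\mvr(K_n) \le 1$ and equality is trivial; for $\overline{K_n}$, an orthogonal representation is exactly a collection of $n$ pairwise orthogonal nonzero vectors, which needs at least $n$ dimensions and is realized by the standard basis of $\reals^n$.

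For part 2, the upper bound is obtained by placing minimum representations $\phi_i: V_i \to \reals^{\mvr(G_i)}$ into complementary orthogonal summands of $\reals^{\mvr(G_1)+\mvr(G_2)}$ via zero-padding; the required non-adjacency between $V_1$ and $V_2$ is then reflected by orthogonality across the summands, while all internal inner products are preserved. For the lower bound, fix any orthogonal representation $\phi$ of $G$ in $\reals^d$. Every vector in $\phi(V_1)$ is orthogonal to every vector in $\phi(V_2)$, so the spans $W_i := \mathrm{span}\,\phi(V_i)$ are mutually orthogonal subspaces of $\reals^d$, giving $d \ge \dim W_1 + \dim W_2$; since $\phi|_{V_i}$ is an orthogonal representation of $G_i$ inside $W_i$, each $\dim W_i \ge \mvr(G_i)$, completing the inequality.

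The substantive step is part 3. The lower bound $\mvr(G) \ge \max(\mvr(G_1),\mvr(G_2))$ is immediate from part 1, since each $G_i$ is an induced subgraph of $G_1 \vee G_2$. For the upper bound, I set $d := \max(\mvr(G_1),\mvr(G_2))$, take minimum representations $\phi_i$, and embed each into a common $\reals^d$ by zero-padding. All conditions internal to $G_1$ and $G_2$ are inherited, and the only new requirement is that $\langle \phi_1(u), \phi_2(v)\rangle \ne 0$ for every $u \in V_1$ and $v \in V_2$, since every such pair is an edge of the join. My plan is to replace $\phi_2$ by $Q\phi_2$ for a suitable $Q$ in the orthogonal group $O(d)$; because $Q$ preserves inner products, all conditions within $G_2$ remain satisfied. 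For each fixed pair $(u,v)$, the locus of $Q$ with $\langle \phi_1(u), Q\phi_2(v)\rangle = 0$ is a proper algebraic subvariety of $O(d)$, using that $\phi_1(u)$ and $\phi_2(v)$ are nonzero, and the union over the finitely many pairs $(u,v)$ is still a proper subvariety, so a generic $Q$ simultaneously makes every cross inner product nonzero. The main obstacle throughout is precisely this reconciliation in part 3, between the inner-product conditions internal to each $G_i$ and the universal nonzero cross-condition demanded by the join; the genericity argument handles it cleanly, and I do not expect further difficulty.
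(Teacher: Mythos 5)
Your proposal is correct, and for parts 1, 2, and 4 it matches the paper, which simply declares these immediate from the definition (the paper derives part 4 from parts 2 and 3 applied to $n$ copies of $K_1$, whereas you give direct constructions; both are fine). The real difference is part 3: the paper does not prove the join formula at all but cites it as equivalent to Theorem 3.2 of the Fallat--Hogben survey, while you supply a self-contained argument by zero-padding both representations into $\reals^d$ with $d=\max(\mvr(G_1),\mvr(G_2))$ and applying a generic orthogonal transformation to one of them so that all cross inner products become nonzero. This is essentially the standard argument behind the cited result, and it is sound; the only point worth tightening is the phrase ``proper algebraic subvariety of $O(d)$'': since $O(d)$ is disconnected, a finite union of proper subvarieties could in principle swallow a whole component, so you should either work inside the connected group $SO(d)$ (where transitivity on the sphere for $d\ge 2$ shows each zero locus is a proper, hence measure-zero, analytic subset, and a finite union of measure-zero sets is still measure zero) or phrase the genericity in measure-theoretic terms. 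With that cosmetic fix, your write-up buys a reader a complete proof where the paper offers only a pointer, at the cost of a paragraph of genericity bookkeeping.
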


\begin{proof} (1) is immediate, since any orthogonal representation of $G$ restricts to an orthogonal representation of $H$. (2) also follows immediately from the definition. (3) is equivalent to Theorem 3.2  in \cite{SII}. (4) follows from the fact that $K_n$ is the join of $n$ copies of $K_1$ and $\overline{K_n}$ is the union of $n$ copies of $K_1$. \end{proof}

\section{Complement Critical Graphs}\label{Results}
One can consider the minimum vector rank is as a generalization of ordinary graph coloring in which we assign vectors to the vertices instead of colors. This approach was taken in \cite{OVC}, although they defined their quantities differently. The primary study of this paper is a further extension of the analogy with graph coloring. Recall that a graph $G$ is {\em critical} if $\chi(H) < \chi(G)$ for all subgraphs $H$ of $G$. We propose the following as a natural generalization: 
\begin{defn}\rm 
A graph $G$ is {\em vector~critical} if, for any induced subgraph $H$ of $G$, $\mvr(H) < \mvr(G)$.\end{defn}
We know that  $\mvr(H) \le \mvr(G)$ for any induced subgraph $H$; $G$ is vector critical if this inequality is strict for all $H \ne G$. We will refer to such graphs as {\bf critical} when there is no ambiguity.

The class of critical graphs is fairly restrictive. Since we are looking at induced subgraphs, which are defined in terms of a particular vertex subset, it makes sense to consider a graph and its complement as a pair; the vertices of $H$ determine the structure of $\overline{H}$. This suggests the following definition:
\begin{defn} \rm
A graph $G$ is {\em complement critical} if, for any proper induced subgraph $H$ of $G$ 
\bee \mvr(H) + \mvr({\overline{H}}) < \mvr(G)+\mvr(\overline{G})\eee
\end{defn}
The sum in this inequality is a convenient shorthand for saying that either $\mvr(H) < \mvr(G)$ or $\mvr(\overline{H}) < \mvr(\overline{G})$; removing any vertices from $G$ will decrease at least one of the minimum vector ranks.  Complement criticality is clearly a weaker condition than criticality.  Every vector critical graph is complement critical, but we will see examples in Section \ref{Families} of graphs which are complement critical but not vector critical.

Writing the definition this way also connects it with the larger conversation about the Graph Complement Conjectures (see, e.g., \cite{GCC, ZFS}), which were initially proposed at the 2006 AIM workshop. Despite many positive contributions, the conjecture and several variants remain unproven. We state the two primary conjectures and append a third given in the language of minimum vector rank.

\begin{conj}[Graph Complement Conjectures]\label{GCC}
For any simple graph $G$ on $n$ vertices,
\be
\mr(G)+ \mr(\overline G) \le n+2 \label{mrGCC} \ee\be
\mr_+(G) + \mr_+(\overline G) \le n+2 \label{mrpGCC} \ee\be
\label{mvrGCC}  \mvr(G) + \mvr(\overline G) \le n+2\ee
\end{conj}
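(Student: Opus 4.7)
The plan is to prove all three inequalities in Conjecture 3.3 by strong induction on $n = |G|$, exploiting the definition of complement criticality to narrow the inductive step to a much smaller class of graphs. I will carry out the argument for inequality (2.3) in terms of $\mvr$; the arguments for (2.1) and (2.2) proceed in parallel once one introduces the analogous notions of complement criticality for $\mr$ and $\mr_+$ respectively.

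The base case $n = 1$ gives $\mvr(G) + \mvr(\overline{G}) = 1 + 1 \le 3$. For the inductive step, assume the bound holds for all graphs on fewer than $n$ vertices, and let $G$ be a graph on $n \ge 2$ vertices. If $G$ is \emph{not} complement critical, then by definition there exists a proper induced subgraph $H$ of $G$ for which the strict inequality $\mvr(H) + \mvr(\overline{H}) < \mvr(G) + \mvr(\overline{G})$ fails. Since $\overline{H}$ is an induced subgraph of $\overline{G}$, Observation 2.3(1) gives $\mvr(H) \le \mvr(G)$ and $\mvr(\overline{H}) \le \mvr(\overline{G})$, which combined with the previous sentence forces equality $\mvr(H) + \mvr(\overline{H}) = \mvr(G) + \mvr(\overline{G})$. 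The inductive hypothesis then yields
\[
\mvr(G) + \mvr(\overline{G}) \;=\; \mvr(H) + \mvr(\overline{H}) \;\le\; |H| + 2 \;\le\; (n-1) + 2 \;<\; n + 2,
\]
and we are done in this case.

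The reduction above shows that it suffices to establish the conjecture on complement critical graphs, and that is where the main obstacle lies. A reasonable line of attack is to first verify the bound on well-behaved examples — for instance $\mvr(K_n) + \mvr(\overline{K_n}) = n+1$, and the families analyzed in Sections \ref{Complements} and \ref{Families} can be checked individually — and then seek a structural theorem guaranteeing that every complement critical graph has a controllable form (sparse, or built from such pieces by unions and joins, so that Observation 2.3 parts (2)–(3) apply). The fundamental difficulty is that complement criticality precludes any naive descent argument: removing any single vertex fails to decrease $\mvr(G) + \mvr(\overline{G})$, so no inductive shortcut on $G$ itself is available and one must extract the bound from the intrinsic structure of the orthogonal representations of $G$ and $\overline{G}$ simultaneously. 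This is precisely the point at which the Graph Complement Conjecture has resisted proof, and the authors explicitly acknowledge that the conjecture remains open.
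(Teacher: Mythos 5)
There is a genuine gap here, and it is unavoidable: the statement you are asked to prove is Conjecture \ref{GCC}, which the paper explicitly leaves open (``None of these inequalities has been proven to date''). The paper contains no proof of this statement, so there is nothing to match your argument against. What your proposal actually establishes is only the reduction step: if $G$ is not complement critical, then a proper induced subgraph $H$ attains $\mvr(H)+\mvr(\overline{H})=\mvr(G)+\mvr(\overline{G})$, and the bound for $G$ follows from the bound for $H$. This reduction is correct, and it is in substance identical to the paper's Proposition \ref{equivalence} -- the paper phrases it as a minimal-counterexample argument (choose $H\in{\cal S}(G)$ of minimum order and observe it is complement critical), while you phrase it as strong induction on $n$; these are the same argument. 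But the induction never closes: at every order $n$ for which a complement critical graph exists, your inductive step has nothing to say about that graph, so the inductive hypothesis for order $n$ is never fully established and cannot be invoked at order $n+1$ without circularity. The entire content of the conjecture lives in the complement critical case, which your final paragraph candidly concedes is untouched.

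To be clear about what would be needed: you would have to prove (\ref{mvrGCC}) directly for every complement critical graph, with no descent available, by constructing orthogonal representations of $G$ and $\overline{G}$ in dimensions summing to at most $n+2$ from structural information alone. The paper only does this for specific families (trees, unicyclic graphs, necklaces, books), not in general. Your sketch of ``seek a structural theorem guaranteeing that every complement critical graph has a controllable form'' is a research program, not a proof step, and no such theorem is known. So the proposal correctly reproves Proposition \ref{equivalence} but does not prove, and could not honestly claim to prove, Conjecture \ref{GCC} itself.
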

Since $\mbox{mr}(G) \le \mbox{mr}_+(G) \le \mvr(G)$ for any graph $G$, it is clear that (\ref{mvrGCC}) implies (\ref{mrpGCC}), which in turn implies (\ref{mrGCC}). What is less obvious is that if (\ref{mrpGCC}) holds for all simple graphs $G$, then so does (\ref{mvrGCC}). This is good news, since we are considering the minimum vector rank as a small tweak of the minimum semidefinite rank. 

The equivalence follows in a straightforward way: Suppose there exists a graph $G$ for which $\mvr(G) + \mvr(\overline G) > n+2$. Without loss of generality, we assume that $G$ is connected (since for any graph at least one of $G$ and $\overline{G}$ must be connected \cite{West}). If we then look at the graph $G' =G \cup K_1$, then $\mr_+(G') = \mr_+(G)+1 = \mvr(G)+1$ since $G$ is connected; and $\mr_+(\overline{G'}) = \mr_+(\overline{G} \vee K_1) = \mvr(\overline{G})$. This implies that 
\bee
\mr_+(G') + \mr_+(\overline{G'})  =  \mvr(G) + 1 + \mvr(\overline{G}) > 1 + n + 2 = |G'| + 2
\eee
So, if there is a counterexample to (\ref{mrpGCC}), then we can build one for (\ref{mvrGCC}). Thus, the validity of (\ref{mrpGCC}) for all graphs is equivalent to the validity of (\ref{mvrGCC}) for all graphs. 

None of these inequalities has been proven to date, but the idea of complement critical graphs gives yet another way to think about this. 
\begin{proposition}\label{equivalence}
If the inequality (\ref{mvrGCC}) holds for all complement critical graphs $G$, then it holds for all simple graphs and Conjecture \ref{GCC} is true.
\end{proposition}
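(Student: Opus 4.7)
The plan is to argue by minimal counterexample. Assume, toward contradiction, that (\ref{mvrGCC}) holds on every complement critical graph but fails on some simple graph. Among all graphs $H$ which arise as induced subgraphs of such a counterexample (including the counterexample itself) and which satisfy $\mvr(H) + \mvr(\overline{H}) > |H| + 2$, I would pick one, call it $H^*$, of minimum order. My goal is then to show $H^*$ must itself be complement critical, which immediately produces the contradiction with the hypothesis.

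To show this, suppose $H^*$ is not complement critical. Then by definition there is a proper induced subgraph $H' \subsetneq H^*$ for which the strict inequality $\mvr(H') + \mvr(\overline{H'}) < \mvr(H^*) + \mvr(\overline{H^*})$ fails. I would invoke two ingredients: (i) monotonicity of $\mvr$ under induced subgraphs (Observation~2.4(1)), and (ii) the fact that the complement of the induced subgraph of $H^*$ on a vertex subset $W$ is the induced subgraph of $\overline{H^*}$ on $W$. These give $\mvr(H') \le \mvr(H^*)$ and $\mvr(\overline{H'}) \le \mvr(\overline{H^*})$, so the failure of strict inequality forces equality of the sums. Since $|H'| < |H^*|$, I then obtain
\[
\mvr(H') + \mvr(\overline{H'}) \;=\; \mvr(H^*) + \mvr(\overline{H^*}) \;>\; |H^*| + 2 \;>\; |H'| + 2,
\]
so $H'$ is itself a counterexample of strictly smaller order, contradicting minimality of $H^*$. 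Hence $H^*$ is complement critical, which is incompatible with the hypothesis.

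Once (\ref{mvrGCC}) is established for all simple graphs, the other two inequalities of Conjecture~\ref{GCC} follow instantly from the chain $\mr(G) \le \mr_+(G) \le \mvr(G)$ noted right after the statement of the conjecture. The proof is essentially just a well-ordering argument, so I do not expect a real obstacle. The one point worth emphasizing is the simultaneous monotonicity of $\mvr$ on both $H^*$ and $\overline{H^*}$ under deletion of a common set of vertices; this is what allows the single parameter $\mvr(H) + \mvr(\overline{H}) - |H|$ to be tracked in the induction on order without any further bookkeeping.
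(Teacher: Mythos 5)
Your proposal is correct and follows essentially the same route as the paper: both arguments rest on the monotonicity of $\mvr$ under induced subgraphs (applied to the graph and its complement simultaneously) together with an extremal choice — the paper extracts a minimal element of the set of proper induced subgraphs preserving both ranks, while you phrase it as a minimal counterexample, but these are the same well-ordering argument. No gaps.
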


\begin{proof} Suppose $G$ is a graph which is not complement critical. Define ${\cal S}(G)$ to be the set of all proper induced subgraphs $H$ of $G$ such that $\mvr(H) = \mvr(G)$ and $\mvr(\overline{H}) = \mvr(\overline{G})$. Since $G$ is not complement critical, ${\cal S}(G)$ is not empty. Choose a graph $H \in {\cal S}(G)$ with the minimum number of vertices. For any proper induced subgraph $K$ of $H$, $|K| < |H|$ implies that $K \notin {\cal S}(G)$, which in turn implies that $K \notin {\cal S}(H) \subseteq {\cal S}(G)$. This means that $ {\cal S}(H) $ is empty and $H$ is complement critical. 

Note that this argument follows from the fact that the minimum vector rank is monotone on induced subgraphs; an analogous property exists for critical graphs based on chromatic number (mentioned in \cite{West}) or on any similarly defined quantity. It is not inherently about the minimum vector rank.

The proposition follows quickly from this simple fact. Suppose our graph $G$ violates the graph complement conjecture: $\mvr(G) + \mvr(\overline{G}) > |G| + 2$. We consider the complement critical induced subgraph $H$ from above and see that 
\bee
\mvr(H) + \mvr(\overline{H}) = \mvr(G) + \mvr(\overline{G}) > |G| + 2 \ge |H| + 2
\eee
which implies that the graph $H$ also violates the inequality (\ref{mvrGCC}). 

Thus, if (\ref{mvrGCC}) is violated by a graph which is not complement critical, it must also be violated for some complement critical graph. Taking the contrapositive gives us Proposition \ref{equivalence}.
\end{proof}

Proposition \ref{equivalence} says  the complement critical graphs form a sufficient set for the Graph Complement Conjectures. The conjecture need only be verified for this smaller set to imply its validity for all graphs. In the remainder of the paper, we explore classes of complement critical graphs. 

%%%%%%%%%%%%%%%%%%%%%%%%%%%%%%%%%%%%%%%%%%%%%%%%%%%%%%%%%%

\section{Minimum Vector Rank of Graph Complements}\label{Complements}

In preparation for our discussion of families of complement critical graphs, it will be useful to establish formulas for the minimum rank of the complement of certain types of graphs. This work starts with the bounds given in \cite{OR} on the minimum ranks of several families of graph complements; in particular  that  $\mvr(\overline{C_n}) \le 4$ for any cyclic graph $C_n$ and, as a result, that $\mvr(\overline{U}) \le 4$ for any unicyclic graph $U$. We give precise conditions which dictate the minimum vector rank of these graph complements and then extend them to larger classes of graphs. 

\subsection{Trees and Cycles}

The simplest sparse graphs are trees. The complement of a tree $T$ was shown to satisfy $\mvr(\overline{T}) = \mr_+(\overline{T}) \le 3$ in Theorem 3.16 in \cite{ZFP}. We include the full result for completeness: 
\begin{proposition}\label{mvr tree comp.}
Let $T$ be a tree of order $n\ge 2$, then
\bee
\mvr(\overline{T})=\left\{\begin{array}{ll}
2 & \textrm{if $T=K_{1,n-1}$}\\
3 & \textrm{otherwise}
\end{array}\right.
\eee
\end{proposition}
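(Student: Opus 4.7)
The plan is to split the argument by whether $T$ is a star, invoking the cited upper bound $\mvr(\overline{T})\le 3$ (Theorem 3.16 of \cite{ZFP}) for the non-star case and pinning down the exact value by establishing the matching lower bounds directly.

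In the star case $T=K_{1,n-1}$, the complement decomposes as $\overline{K_{1,n-1}} = K_1\cup K_{n-1}$: the central vertex becomes isolated in the complement, while the $n-1$ leaves, being pairwise non-adjacent in $T$, form a clique. Parts (2) and (4) of Observation 2.3 then immediately give
\[
\mvr(\overline{T}) = \mvr(K_1)+\mvr(K_{n-1}) = 1+1 = 2.
\]

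For the non-star case the cited result yields $\mvr(\overline{T})\le 3$, so the work reduces to the lower bound $\mvr(\overline{T})\ge 3$. The key structural fact I would use is that a tree which is not a star has diameter at least $3$, and therefore contains a length-three path; in a tree any such path is automatically induced, so $T$ contains $P_4$ as an induced subgraph. Because $P_4$ is self-complementary, $\overline{T}$ likewise contains $P_4$ as an induced subgraph, and monotonicity (Observation 2.3(1)) further reduces the task to showing $\mvr(P_4)\ge 3$.

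To prove $\mvr(P_4)\ge 3$ I would argue by contradiction, assuming a representation of $P_4$ on vertices $v_1,v_2,v_3,v_4$ in $\reals^2$. Each nonzero planar vector is determined up to scaling by a direction $\theta\in[0,\pi)$, and two vectors are orthogonal iff their directions differ by $\pi/2$. The non-edges $\{v_1,v_3\}$ and $\{v_1,v_4\}$ force $\theta_3 = \theta_4$, and the non-edge $\{v_2,v_4\}$ then forces $\theta_2 = \theta_1$; but this makes $\theta_2 \perp \theta_3$, contradicting the edge $\{v_2,v_3\}$ of $P_4$. The only non-routine background ingredient is the standard tree fact that diameter at most $2$ characterizes stars, which I would invoke or dispatch in one sentence; there is no real obstacle once $\mvr(P_4)=3$ is in hand, and the proof is essentially bookkeeping.
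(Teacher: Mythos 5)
Your proof is correct and follows essentially the same route as the paper: the star case via $\overline{K_{1,n-1}}=K_1\cup K_{n-1}$, and the non-star case by combining the cited upper bound $\mvr(\overline{T})\le 3$ with an induced $P_4$ giving the lower bound. The only difference is that you explicitly verify $\mvr(\overline{P_4})=\mvr(P_4)\ge 3$ (via self-complementarity and the planar direction argument), which the paper simply asserts as a direct calculation; your verification is sound.
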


\begin{proof} Direct calculation shows that  $\mvr(\overline{K_{1,n}})=\mvr(K_1 \cup K_n)=\mvr(K_1)+\mvr(K_n)=2$. On the other hand, if $T$ is not a star, then $P_4$ is an induced subgraph of $T$, implying that $\mvr(\overline{T}) \ge \mvr(\overline{P_4}) =3$. Combining with the inequality $\mvr(\overline{T}) \le 3$ from \cite{ZFP} gives equality. \end{proof}

Our other starting point is a cyclic graph. The authors of \cite{ZFP} give a proof that $\mr(\overline{C_n}) = 3$ for $n \ge 5$. Their proof gives an explicit orthogonal representation, which implies that in fact $\mr_+(\overline{C_n}) = \mvr(\overline{C_n}) = 3$ for $n \ge 5$. A different proof of this fact was given in \cite{MSR}. Direct calculation shows that $\mvr(\overline{C_3}) = \mvr(3K_1) = 3$ and $\mvr(\overline{C_4}) = \mvr(2P_2) = 2$, which gives us a complete characterization of $\mvr(\overline{C_n})$: 

\begin{proposition}\cite{ZFP, MSR}\label{mvr comp. cycle}
Let $C_n$ be the cycle on $n \ge 3$ vertices. Then $\mvr(\overline{C_n}) = 3$ if $n \ne 4$. 

In the case $n =4$, $\mvr(\overline{C_4}) =2$. 
\end{proposition}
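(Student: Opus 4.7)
The plan is to split on $n$. The two small cases $n\in\{3,4\}$ fall immediately out of Observation 2.3, while for $n\ge 5$ I would combine an induced-subgraph lower bound with the explicit $\reals^3$ representation of the references.

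First, the two exceptional cases. For $n=3$, we have $C_3=K_3$, so $\overline{C_3}=3K_1$, and Observation 2.3(2),(4) give $\mvr(\overline{C_3})=3\mvr(K_1)=3$. For $n=4$, $\overline{C_4}$ is a perfect matching $K_2\cup K_2$, so the same observation yields $\mvr(\overline{C_4})=\mvr(K_2)+\mvr(K_2)=2$.

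For $n\ge 5$, the lower bound $\mvr(\overline{C_n})\ge 3$ comes from an induced subgraph argument. Take any four consecutive vertices $v_i,v_{i+1},v_{i+2},v_{i+3}$ of $C_n$: they induce $P_4$ in $C_n$ and therefore $\overline{P_4}$ in $\overline{C_n}$. A short edge count shows $\overline{P_4}\cong P_4$, and since $P_4$ is a tree that is not a star, Proposition 3.1 gives $\mvr(\overline{P_4})=3$. Monotonicity of $\mvr$ on induced subgraphs (Observation 2.3(1)) then forces $\mvr(\overline{C_n})\ge 3$.

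For the matching upper bound I would invoke the explicit orthogonal representation of $\overline{C_n}$ in $\reals^3$ constructed in Theorem 3.16 of \cite{ZFP} (and independently in \cite{MSR}). A natural first attempt places the $n$ vectors on a circular cone, $\phi(v_j)=(\alpha,\cos j\theta,\sin j\theta)$, and sets $\alpha^2=-\cos\theta$, so that
\[
\langle\phi(v_i),\phi(v_j)\rangle=\alpha^2+\cos((i-j)\theta)
\]
vanishes precisely when $j\equiv i\pm 1\pmod n$, reproducing the non-edges of $\overline{C_n}$. The main obstacle — and the reason the proof is not a one-liner — is choosing $\theta$ so that simultaneously $\cos\theta<0$ (so $\alpha$ is real and nonzero) and $\cos(k\theta)\ne\cos\theta$ for every $k\in\{2,\dots,n-2\}$ (so no spurious orthogonality or pair of identical vectors slips in). A single uniform $\theta=2\pi m/n$ does not work for every $n$; some cases (notably $n=6$) force a modified construction, and it is this case analysis that occupies the cited proofs.
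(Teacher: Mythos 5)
Your proof is correct and follows essentially the same route as the paper: the cases $n=3$ and $n=4$ are handled by the direct identifications $\overline{C_3}=3K_1$ and $\overline{C_4}=2K_2$, and the case $n\ge 5$ is outsourced to the explicit $\reals^3$ representations of \cite{ZFP} and \cite{MSR}. Your explicit lower-bound argument via the induced $\overline{P_4}\cong P_4$ and your honest caveat about the failure of the uniform cone construction for $n=6$ are both accurate, but they do not change the substance of the argument.
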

%We also note that the construction in \ref{AS} assigns linearly independent vectors to each vertex. This construction can also be related to unextendible product bases in quantum information. {\bf (See Bennet, et al, Shor et al)}

We  note that, in the case $n \ne 4$, the orthogonal representation in \cite{ZFP} consists of pairwise linearly independent vectors, while the minimum dimension for an orthogonal representation for $\overline{C_4}$ is 2 but jumps up to 4 if we insist that all vectors be pairwise linearly independent. Such representations satisfy the hypotheses of Theorem 2.1 in \cite{OR}, allowing us to extend the methods of that paper relating to the complements of unicyclic graphs. 

\begin{proposition} \label{U comp.}
Let $U$ be a unicyclic graph, then
\bee
\mvr(\overline{U})=\left\{\begin{array}{ll}
4 & \textrm{if $\overline{L_4}$ is an induced subgraph of } U\\
2 & \textrm{if $U = C_4$}\\
3 & \textrm{otherwise}
\end{array}\right.
\eee
\end{proposition}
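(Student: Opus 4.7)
Plan: I would prove the proposition by a case analysis built on the upper bound $\mvr(\overline{U}) \le 4$ from \cite{OR} and the cycle computations in Proposition \ref{mvr comp. cycle}. The case $U = C_4$ is immediate from Proposition \ref{mvr comp. cycle}, and the case of a cycle $C_n$ with $n \ne 4$ sits inside the ``otherwise'' branch and is also handled directly by that proposition. So the real work is for unicyclic $U$ that is not itself a cycle, and then splits further by whether $\overline{L_4}$ is an induced subgraph.

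For the lower bounds I would exploit induced-subgraph monotonicity of $\mvr$ together with the fact that complementation commutes with the induced-subgraph relation. Thus $\overline{L_4}$ is induced in $U$ exactly when $L_4$ is induced in $\overline{U}$, which gives $\mvr(\overline{U}) \ge \mvr(L_4)$; combined with a separate verification that $\mvr(L_4) = 4$ and the upper bound $\mvr(\overline{U}) \le 4$, this pins down the first case. In the ``otherwise'' branch I would need $\mvr(\overline{U}) \ge 3$, which I would obtain by producing, for each such $U$, an induced subgraph $H$ with $\mvr(\overline{H}) \ge 3$: whenever $U$ contains an induced $P_4$ we use $\mvr(\overline{P_4}) = \mvr(P_4) = 3$, and for the very short list of remaining $P_4$-free unicyclic graphs (essentially $C_3$ with at most a couple of short pendants, since $C_4$ is excluded) a direct inspection of $\overline{U}$ produces an isolated vertex attached to a $P_3$ or a similar three-dimensional witness.

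For the upper bound $\mvr(\overline{U}) \le 3$ in the ``otherwise'' case I would construct an orthogonal representation of $\overline{U}$ in $\reals^3$ explicitly. The cycle portion is supplied by Proposition \ref{mvr comp. cycle}, whose proof produces pairwise linearly independent vectors --- precisely the feature emphasized in the remark following that proposition. I would then extend the representation one tree-vertex at a time, processing non-cycle vertices by increasing distance from the cycle: each new $v$ has a unique $U$-parent $p$ among the previously placed vertices, so $\phi(v)$ must lie in the two-dimensional plane $\phi(p)^\perp$ while avoiding orthogonality with every previously placed $\phi(u)$ corresponding to a non-neighbor of $v$ in $\overline{U}$. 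Each forbidden direction is a single line inside that plane, and a valid $\phi(v)$ exists as long as those lines do not exhaust the plane.

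The main obstacle is aligning this algebraic extension with the combinatorial hypothesis: showing that the forbidden lines actually exhaust $\phi(p)^\perp$ \emph{if and only if} the local configuration witnesses an induced $\overline{L_4}$ in $U$. This requires an explicit identification of $L_4$, a standalone verification that $\mvr(L_4) = 4$, and a careful enumeration of how the trees hanging off the cycle can be arranged without introducing an induced $\overline{L_4}$. I expect this combinatorial bookkeeping to be the technical heart of the argument and the reason the authors defer the full details to the Appendix.
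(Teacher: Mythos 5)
Your plan matches the paper for most of the territory: the upper bound of $4$ from \cite{OR}, the lower bound $\mvr(\overline{U})\ge\mvr(L_4)=4$ via induced-subgraph monotonicity when $\overline{L_4}$ is induced, and the one-vertex-at-a-time extension from the cycle representation (your ``forbidden lines in the plane $\phi(p)^\perp$'' argument is exactly the mechanism behind Theorem 2.1 and Corollary 3.4 of \cite{OR}, which is what the paper invokes). Your route to the lower bound $3$ in the ``otherwise'' branch, via an induced $P_4$ plus a short list of $P_4$-free exceptions, is more laborious than the paper's (which reads it off from $\mvr(\overline{C_n})=3$ when $n\ne 4$, and from the tree-complement formula when $n=4$) but it is correct.

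The genuine gap is in the $4$-cycle case, which is precisely where you defer the work. Your plan starts the build-up from an orthogonal representation of the cycle complement, but $\overline{C_4}$ has no representation in $\reals^3$ by pairwise linearly independent vectors: any $\reals^3$ representation forces the two vectors on one diagonal pair of the $4$-cycle to be parallel (the paper remarks on exactly this after Proposition \ref{mvr comp. cycle}). Consequently the generic extension step can fail --- if a tree vertex must be attached to a cycle vertex whose vector is parallel to another non-neighbor's vector, the entire plane $\phi(p)^\perp$ is forbidden --- and the failure is not a purely local ``forbidden lines exhaust the plane'' condition at one step, but a global question of which diagonal you chose to carry the parallel pair in the initial representation. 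The paper sidesteps all of this with one observation you are missing: if $U$ contains a $4$-cycle but no induced $\overline{L_4}$, then some diagonally opposite pair of cycle vertices both have degree $2$, and these are duplicate vertices in $\overline{U}$; deleting one leaves a tree, Proposition \ref{mvr tree comp.} gives $\mvr\le 3$ (or $2$ for a star, whence $U=C_4$), and the deleted vertex is reinstated by copying its duplicate's vector. Your approach can likely be repaired by assigning the forced parallel pair to that same degree-$2$ diagonal, but as written the hardest case is left unresolved. (Minor point: the paper does not defer this proof to the Appendix; the Appendix handles the ear lemmas and Proposition \ref{mvr leq 3}.)
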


The graph $\overline{L_4}$ is the complement of a 2-tree as described in \cite{OR} and is shown in Figure \ref{L4Comp}.  Although this proposition can be seen as a special case of Proposition \ref{mvr leq 3}, the direct proof is an immediate application of the methods in \cite{OR}, so we include it here: 

\begin{figure}
\centering
\includegraphics[width = 2 cm, height = 3cm]{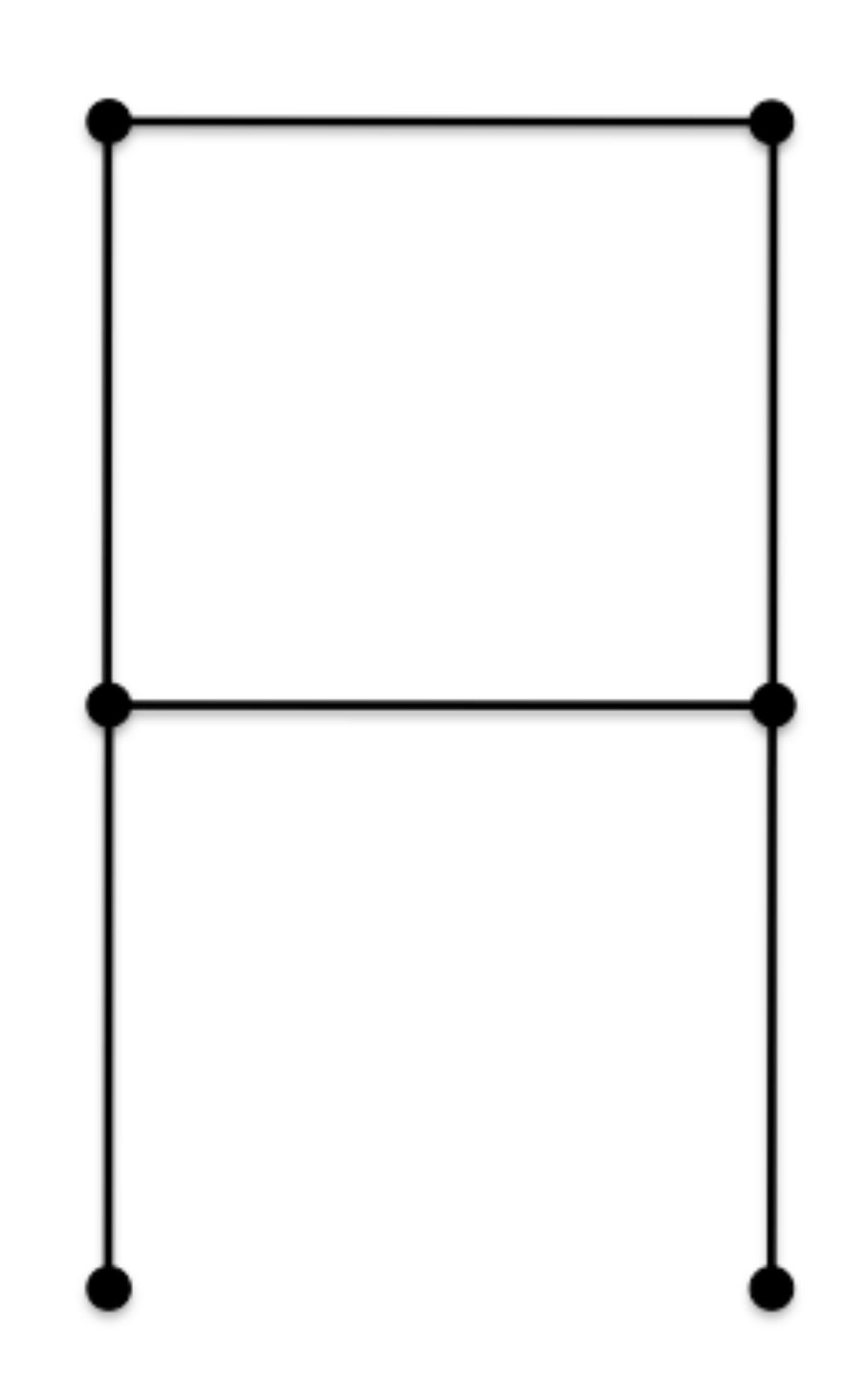}
\caption{The unicyclic graph $\overline{L_4}$, whose complement has minimum vector rank 4.} \label{L4Comp} \end{figure}
\begin{proof} Suppose that $U$ is built on a cycle of length $n \ne 4$. It follows from Proposition \ref{mvr comp. cycle} that the complement of this cycle has an orthogonal representation in $\reals^3$ in which any two vectors are linearly independent. Using the method of Corollary 3.4 from \cite{OR}, a unicyclic graph $U$ can be constructed from the cycle $C_n$ by adding one vertex at a time, with the new vertex adjacent to at most one prior vertex in $U$. The orthogonal representation of $U$ can similarly be built up one vector at a time. Thus, $\mvr(\overline{U})=\mvr(\overline{C_n})=3$ given that $n \ne 4$. 

If $n =4$, then there are two possibilities. If $U$ contains $\overline{L_4}$ as an induced subgraph, then $\mvr(\overline{U}) \ge \mvr(L_4) = 4$. Since $\mvr(\overline{U}) \le 4$ from \cite{OR}, we see that $\mvr(\overline{U})=4$ if $U$ contains $\overline{L_4}$ as an induced subgraph. 

If a unicyclic graph contains a 4-cycle but does not have $\overline{L_4}$ as an induced subgraph, then it contains a pair of diagonally opposite vertices of degree 2, as seen in Figure \ref{unicyclic4cycle}. These are duplicate vertices in $\overline{U}$. Removing one of these vertices leaves a tree, which means that $\mvr(\overline{U-v}) = 3$ unless $(U-v)$ is a star, in which case $U = C_4$. We can then reinstate the vertex $v$ and assign it the same vector as its duplicate. Thus, any unicyclic graph that does not have $\overline{L_4}$ as an induced subgraph will have $\mvr(\overline{U})= 3$ or $2$, depending on whether $U=C_4$, which is what we wished to show. \end{proof}
\begin{figure}[h]
\centering
\includegraphics[width=2cm,height=2cm]{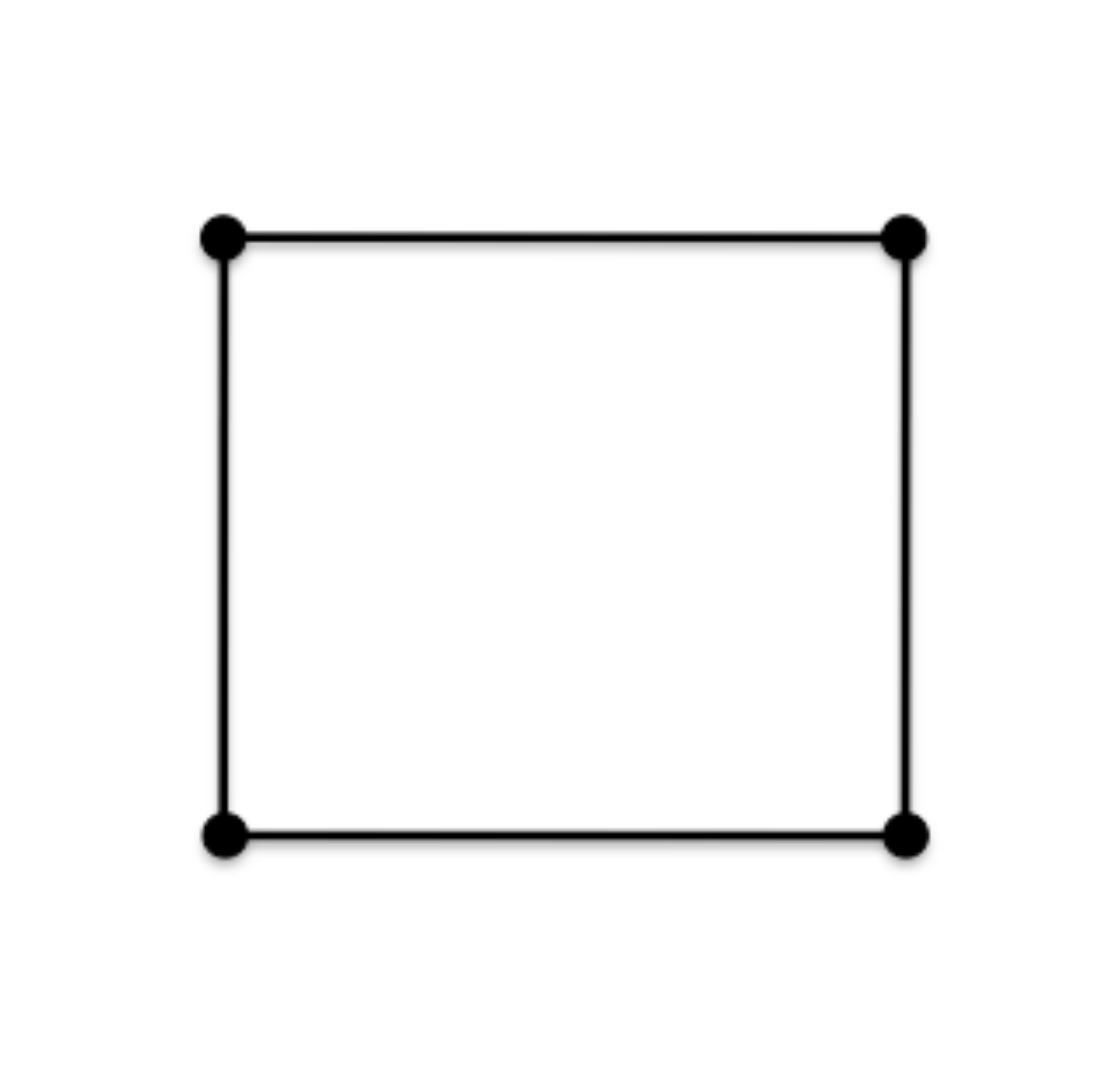} \hfil
\includegraphics[width=3.5cm,height=3cm]{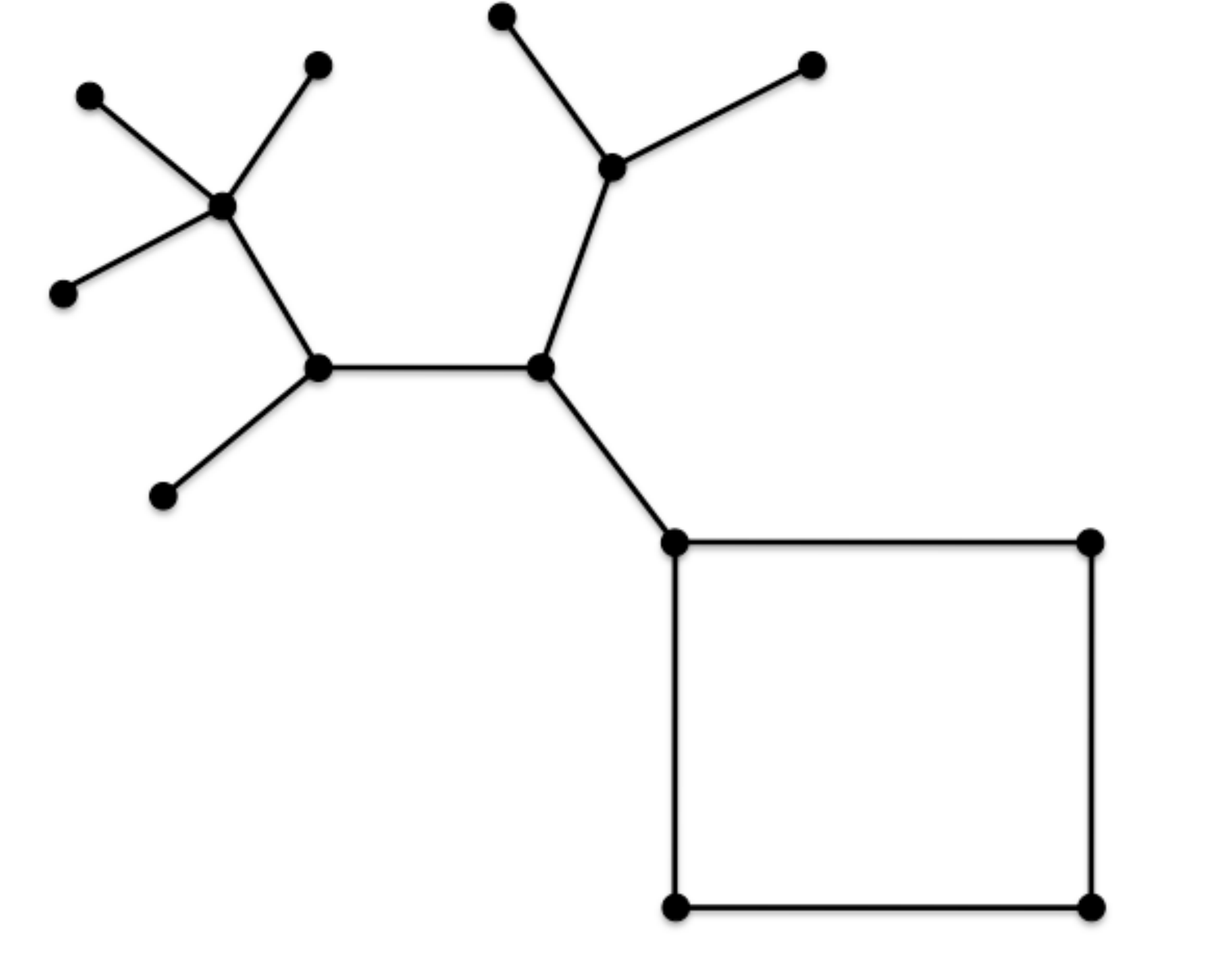} \hfil \includegraphics[width=3.5cm,height=3cm]{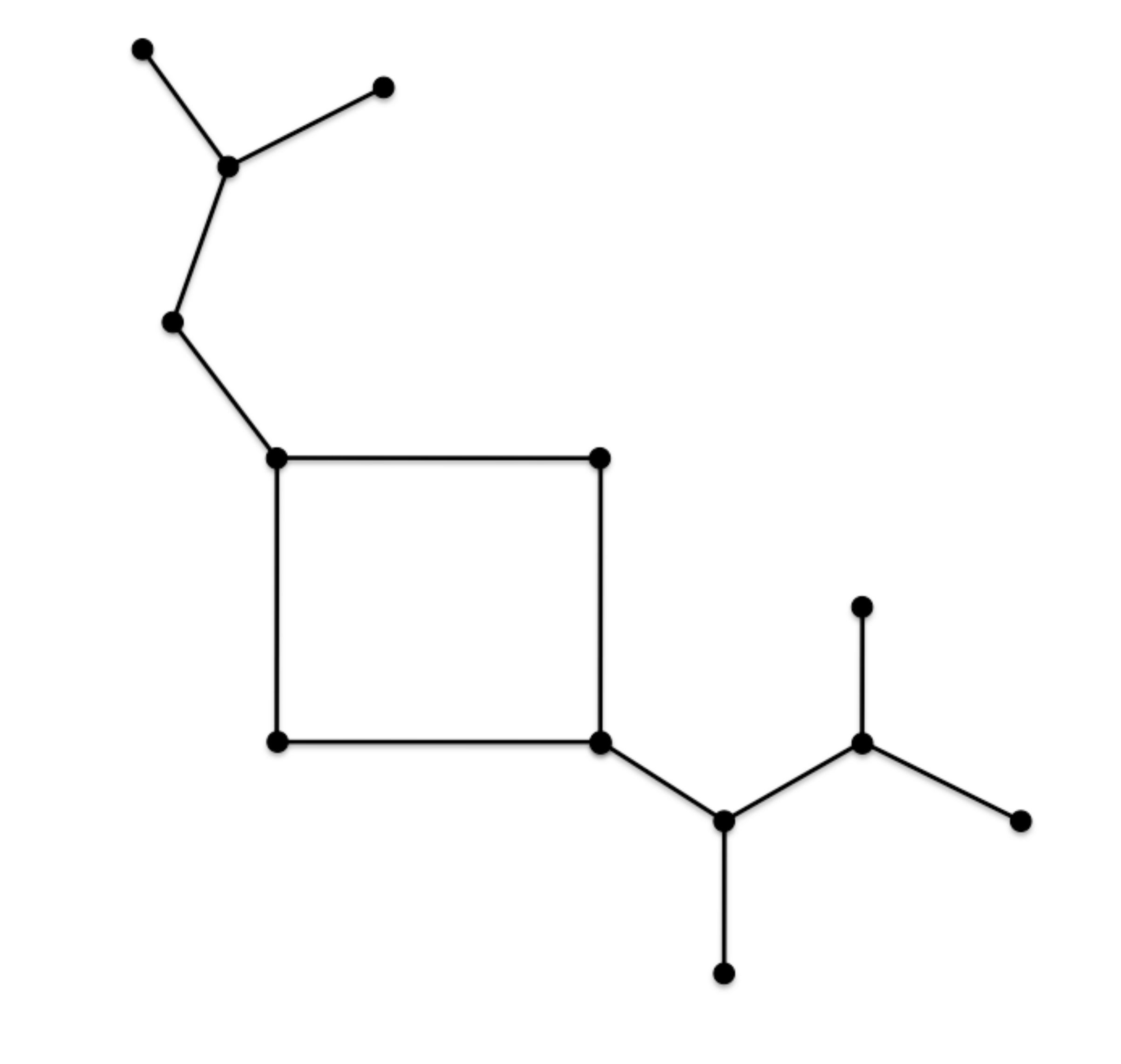}
\caption{Examples of unicyclic graphs with 4-cycles but no induced copies of $\overline{L_4}$}\label{unicyclic4cycle}
\end{figure}

\subsection{Adding long ears to graphs}\label{Section:Ears}

We would like to extend these ideas to calculate the minimum rank of the complement of graphs with more than one cycle. The following proposition will be a useful tool for this and is of interest in its own right. Recall that an {\em ear} of a graph is a maximal path subgraph whose internal vertices each have degree 2 in $G$. \cite{West} %West glossary def is given in more informal terms

\begin{proposition}[Ear Decomposition]\label{ears} Let $G$ be a connected graph, and let $P$ be an ear of $G$ containing at least five vertices. Let $H$ be the subgraph of $G$ induced by removing the internal vertices of $P$. 

If $\overline{H}$ has an orthogonal representation in $\reals^3$ consisting of pairwise linearly independent vectors, then so does $\overline{G}$. 
\end{proposition}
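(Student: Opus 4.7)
The plan is to extend the given orthogonal representation of $\overline H$ to one of $\overline G$ by choosing vectors for the internal ear vertices sequentially, using a genericity argument at each step. Label the ear as $P = v_0 v_1 \cdots v_k v_{k+1}$ with $v_0, v_{k+1} \in V(H)$ and internal vertices $v_1, \ldots, v_k$ of degree $2$ in $G$; the hypothesis $|P| \ge 5$ gives $k \ge 3$. Since each $v_i$ is non-adjacent in $G$ only to $v_{i-1}$ and $v_{i+1}$, the vector $\phi(v_i) \in \reals^3$ must satisfy $\phi(v_i) \perp \phi(v_{i\pm 1})$ while being non-orthogonal, as well as pairwise linearly independent, to every other vertex's vector.

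First I would define $\phi(v_1), \phi(v_2), \ldots, \phi(v_{k-1})$ in order, treating each as a pendant attachment. The vector $\phi(v_i)$ is constrained to a direction in the $2$-plane $\phi(v_{i-1})^\perp \subset \reals^3$, parameterized by a circle of choices. The forbidden directions are parallelism or unwanted orthogonality with any of the finitely many previously defined vectors; each such condition excludes only finitely many points on the circle, so a valid direction always exists.

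The closing step is where $k \ge 3$ plays its role. Once $\phi(v_{k-1})$ is chosen, the vector $\phi(v_k)$ is forced (up to nonzero scalar) to be $\phi(v_{k-1}) \times \phi(v_{k+1})$, so all remaining constraints on $\phi(v_k)$ translate into constraints on the earlier choice of $\phi(v_{k-1})$. For any $u \ne v_{k-1}, v_{k+1}$, the requirement $\phi(v_k) \not\perp \phi(u)$ becomes $\det[\phi(v_{k-1}), \phi(v_{k+1}), \phi(u)] \ne 0$, i.e., $\phi(v_{k-1}) \notin \mathrm{span}\{\phi(v_{k+1}), \phi(u)\}$. Intersecting this $2$-plane with the plane $\phi(v_{k-2})^\perp$ in which $\phi(v_{k-1})$ lies yields a $1$-dimensional line, contributing only finitely many forbidden points on the circle---\emph{provided} the two $2$-planes are distinct. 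They coincide only if $\phi(v_{k-2}) \perp \phi(v_{k+1})$; but $v_{k-2}$ is internal (using $k \ge 3$), with degree $2$ in $G$ and neighbors $v_{k-3}, v_{k-1}$, hence not adjacent to $v_{k+1}$ in $G$ and therefore adjacent in $\overline G$, forcing $\phi(v_{k-2}) \not\perp \phi(v_{k+1})$. An analogous analysis handles the finitely many parallelism constraints on $\phi(v_k)$.

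The main obstacle is this non-degeneracy check at the closing step---verifying that the constraints imposed on $\phi(v_{k-1})$ by the \emph{forced} vector $\phi(v_k)$ each cut out only finitely many directions, rather than collapsing the entire $1$-parameter family. Once established, finitely many bad points on an infinite circle still leave valid options, and setting $\phi(v_k) := \phi(v_{k-1}) \times \phi(v_{k+1})$ completes an orthogonal representation of $\overline G$ in $\reals^3$ with pairwise linearly independent vectors.
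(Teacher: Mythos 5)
Your proposal is correct, and it reaches the conclusion by a noticeably different organization of the closing step than the paper does. The paper builds the ear inward from \emph{both} endpoints, adding each new vertex as a pendant via Theorem 2.1 of \cite{OR}, and then inserts the one remaining middle vertex (which has two already-placed neighbors) by invoking Lemma \ref{prop:degree2}: that lemma \emph{re-chooses} the vectors of the two neighbors as cross products $\mathbf{u}_t\times\mathbf{y}_1$, $\mathbf{u}_t\times\mathbf{y}_2$ in a one-parameter family and picks $t$ generically, with the length hypothesis guaranteeing the two neighbors are degree-one with no common neighbor at that stage. You instead build from one end only, leave all previously placed vectors untouched, observe that the last vector $\phi(v_k)$ is forced up to scale as $\phi(v_{k-1})\times\phi(v_{k+1})$, and pull every constraint on $\phi(v_k)$ back to a constraint on the free circle of choices for $\phi(v_{k-1})$; the non-degeneracy of each pulled-back constraint is exactly where you use $k\ge 3$, via the fact that $v_{k-2}$ is internal and hence $\phi(v_{k-2})\not\perp\phi(v_{k+1})$. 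Both are genericity arguments in one real parameter built on cross products, but yours is self-contained (it does not need the separate perturbation lemma and its quadratic-in-$t$ analysis for the inner product of the two modified neighbors), at the price of the careful plane-coincidence check you identify; the paper's version isolates the delicate step into a reusable lemma that it also applies elsewhere (e.g., in the proof of Proposition \ref{mvr leq 3}). One small caveat common to both arguments: each implicitly assumes the two endpoints of the ear in $H$ are distinct, and your closing check would need the same assumption when $k=3$.
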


\begin{proof} Suppose our ear consists of the path $u, w_1, w_2, \ldots w_{n-2}, v$, where $u$ and $v$ are vertices in $H$.  We can sequentially build the ear by adding vertices along the path, working from both endpoints, extending our three-dimensional vector representation at every step. Let $V'$ be the set of vertices in $H$, and $H = G[V']$ be the induced subgraph on those vertices. Using Theorem 2.1 of \cite{OR} twice, we  find vectors ${\bf w}_1$ and $\bf{w}_{n-2}$ in $\reals^3$ to give a vector representation of $G[V' \cup \{w_1, w_{n-2}\}]$ since $w_1$ and $w_{n-2}$ are pendant vertices in this graph. We can similarly add vectors for $w_2, w_3, \ldots, w_{n-4}$ in sequence, since these are added as pendant vertices. 

When we reach the final step, this method is no longer sufficient, as $w_{n-3}$ will have two neighbors in $G$ and two non-neighbors in $\overline{G}$. To find a vector representation that completes $\overline{G}$, we need Lemma \ref{prop:degree2}, which allows for tweaking the vectors assigned to the neighbors of $w_{n-3}$. This lemma is stated below and proved in the Appendix. The requirement that $n \ge 5$ ensures that when the two paths meet in the middle, that vertex is sufficiently isolated from the rest of the graph that we can adjust our representation to include it; and Lemma \ref{prop:degree2} shows us how to do this. This completes the proof that $\mvr(\overline{G}) = \mvr(\overline{G-w_{n-3}}) = 3$. \end{proof}

\begin{lemma}\label{prop:degree2}
Let $H$ be a graph with vertices $v$ and $w$ such that 
\begin{itemize}
\item{} $\deg(v) = \deg(w) =1$
\item{} $v$ and $w$ have no neighbors in common. 
\end{itemize}
Let $G$ be the graph formed from $H$ by adding a new vertex $u$ of degree 2 which is adjacent to $v$ and $w$. 

If there is an orthogonal representation of $\overline{H}$ in $\reals^3$ which assigns distinct nonzero vectors to each vertex, then  there is such an orthogonal representation of $\overline{G}$ in $\reals^3$. Hence, $\mvr(\overline{G}) \le 3$. 
\end{lemma}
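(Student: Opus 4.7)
The plan is to define $\phi(u) := \phi(v) \times \phi(w)$, the cross product in $\reals^3$. This choice automatically gives $\bra \phi(u), \phi(v) \ket = \bra \phi(u), \phi(w) \ket = 0$, supplying the two required zero inner products in $\overline{G}$; it is nonzero exactly when $\phi(v)$ and $\phi(w)$ are linearly independent; and for any other vertex $x$, $\bra \phi(u), \phi(x)\ket = 0$ if and only if $\phi(x) \in \mathrm{span}(\phi(v), \phi(w))$. So the entire problem reduces to modifying the given representation of $\overline{H}$ (without breaking its defining properties) so that $\phi(v)$ and $\phi(w)$ are linearly independent and no other $\phi(x)$ lies in their span.

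The flexibility comes from the degree hypothesis. Let $v'$ and $w'$ denote the unique neighbors of $v$ and $w$ in $H$; note $v' \neq w'$ by the no-common-neighbor assumption. The only orthogonality constraint imposed on $\phi(v)$ by $\overline{H}$ is $\phi(v) \perp \phi(v')$, so $\phi(v)$ may be replaced by any vector in the 2-dimensional subspace $\phi(v')^{\perp}$ that remains non-orthogonal to every $\phi(x)$ with $x \neq v, v'$. The analogous statement holds for $\phi(w)$ inside $\phi(w')^{\perp}$, and the two perturbations are independent.

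The main step is a genericity argument inside the 2-plane $\phi(v')^{\perp}$. Each of the required conditions --- the non-orthogonalities $\bra \phi(v), \phi(x)\ket \neq 0$ for the finitely many $x \neq v, v'$, the linear independence of $\phi(v)$ from $\phi(w)$, and $\phi(x) \notin \mathrm{span}(\phi(v), \phi(w))$ for each $x \neq v, w$ --- excludes at most a 1-dimensional subspace from the allowed region. A generic $\phi(v)$ therefore satisfies all of them simultaneously, and setting $\phi(u) = \phi(v) \times \phi(w)$ completes the desired orthogonal representation of $\overline{G}$ in $\reals^3$.

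The main obstacle is ensuring that the last family of conditions truly cuts out a proper 1-dimensional slice of $\phi(v')^{\perp}$ rather than the entire plane. This degeneration occurs exactly when $\phi(x)$ is parallel to $\phi(w)$, since then $\phi(x) \in \mathrm{span}(\phi(v), \phi(w))$ identically. To preempt it, I would first perform an auxiliary perturbation of $\phi(w)$ inside $\phi(w')^{\perp}$, avoiding the finitely many directions parallel to some other $\phi(x)$. The no-common-neighbor hypothesis is also used here to rule out $\mathrm{span}(\phi(x), \phi(w)) = \phi(v')^{\perp}$, since that equality would force $\phi(v') \perp \phi(w)$, contradicting $v' \sim w$ in $\overline{H}$ (if $v' \sim w$ in $H$, then $v'$ would be a common neighbor of $v$ and $w$). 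With these preliminaries, each $\mathrm{span}(\phi(x), \phi(w)) \cap \phi(v')^{\perp}$ is genuinely at most a line, and the genericity argument closes cleanly, yielding $\mvr(\overline{G}) \le 3$.
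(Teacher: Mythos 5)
Your argument is correct, and it is essentially the mirror image of the construction the paper uses. The paper's proof fixes the new vertex's vector \emph{first}, as a one-parameter pencil $\mathbf{u}_t = t\mathbf{v} + \mathbf{w}$ inside the span of the original vectors for $v$ and $w$, and then \emph{replaces} the vectors of $v$ and $w$ by the cross products $\mathbf{u}_t \times \mathbf{y}_1$ and $\mathbf{u}_t \times \mathbf{y}_2$ with their neighbors' vectors; every required non-orthogonality then becomes a polynomial of degree at most $2$ in the single variable $t$, and the whole proof reduces to checking that none of these polynomials is identically zero (only the inner product $\bra \mathbf{v}_t, \mathbf{w}_t\ket$ needs an actual computation, via $f'(0)$ and $f''(0)$). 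You instead keep the given representation of $\overline{H}$, perturb $\phi(v)$ and $\phi(w)$ generically inside the $2$-planes $\phi(v')^{\perp}$ and $\phi(w')^{\perp}$, and only then derive $\phi(u) = \phi(v)\times\phi(w)$. Both are genericity-plus-cross-product arguments in $\reals^3$, and both invoke the degree-one and no-common-neighbor hypotheses at the same pressure points: in the paper they guarantee the polynomials in $t$ are not identically zero, while in your version they guarantee each excluded set is a line rather than the whole plane --- your observation that $\mathrm{span}(\phi(x),\phi(w)) = \phi(v')^{\perp}$ is impossible because $\phi(w)\not\perp\phi(v')$ is exactly the analogue of the paper's argument for its condition (II), and your preliminary perturbation of $\phi(w)$ to avoid parallelism correctly disposes of the one genuine degeneration. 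The paper's single parameter buys shorter bookkeeping; your two-stage perturbation is more modular and makes transparent which constraints come from $\overline{H}$ and which from the new vertex. One small addendum worth a clause in a final write-up: the conclusion asserts the new representation is again of the same kind (distinct, or in the appendix's usage pairwise linearly independent, vectors), and since those are also finitely many excluded lines in your $2$-planes, your generic choice delivers this for free.
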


We provide another simple result in the style of $\cite{OR}$ to cover ears of length 3. This will allow us to attach 3-cycles onto pendant vertices in the next section. The lemma is also proved in the Appendix. 

\begin{lemma}\label{3cycleslemma}
Let $H$ be a connected graph with pendant vertex $w$, and let $G$ be the graph formed by adding two new vertices $u$ and $v$ which are adjacent to $w$ and to each other but have no other neighbors. 

If $\overline{H}$ has an orthogonal representation in $\reals^3$ consisting of pairwise linearly independent vectors, then so does $\overline{G}$. 
\end{lemma}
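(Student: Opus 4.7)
The plan is to leave the given orthogonal representation $\phi$ of $\overline{H}$ unchanged on $V(H)$ and extend it by choosing suitable nonzero vectors $\phi(u), \phi(v) \in \reals^3$. First observe that the adjacencies of $\overline{G}$ restricted to $V(H)$ coincide with those of $\overline{H}$: adding $u$ and $v$ to $G$ does not alter any adjacency between two $H$-vertices, so the restricted map is already a valid orthogonal representation. The only new constraints come from $u$ and $v$. In $\overline{G}$, both are adjacent to every $y \in V(H) \setminus \{w\}$ and non-adjacent only to $w$ and to each other. So I need the new vectors to satisfy: (i) $\phi(u), \phi(v) \in W := \phi(w)^\perp$, a two-dimensional subspace; (ii) $\phi(u) \perp \phi(v)$; (iii) $\langle \phi(u), \phi(y) \rangle \neq 0$ and $\langle \phi(v), \phi(y)\rangle \neq 0$ for every $y \in V(H) \setminus \{w\}$; and (iv) the full collection remains pairwise linearly independent.

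Next, I would parametrize. Once a direction $\alpha$ is chosen in $W$ for $\phi(u)$, the direction of $\phi(v)$ is forced (up to scalar) to be the unique line in $W$ orthogonal to $\alpha$. Hence the problem reduces to selecting a single direction $\alpha$ in the plane $W$ while avoiding a finite collection of forbidden lines through the origin. For each $y \in V(H) \setminus \{w\}$, write $\pi_y$ for the orthogonal projection of $\phi(y)$ onto $W$. The crucial observation is that $\pi_y \neq 0$: since $\phi(y)$ and $\phi(w)$ are linearly independent by hypothesis, $\phi(y)$ is not parallel to $\phi(w)$, so its projection onto $W$ is nonzero. Then $\langle \phi(u), \phi(y)\rangle = \langle \alpha, \pi_y\rangle$, so the non-orthogonality requirement for $\phi(u)$ excludes exactly one line in $W$ (perpendicular to $\pi_y$); the analogous requirement for $\phi(v)$ excludes the line through $\pi_y$ itself. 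The pairwise linear independence conditions contribute at most two further excluded lines per $y$ (vacuous when $\phi(y) \notin W$). Altogether, $\alpha$ must avoid a finite union of lines in the plane $W$, so a valid choice exists, and $\phi(v)$ can then be taken perpendicular to $\alpha$ within $W$.

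The main obstacle, and the one place where the hypothesis of pairwise linear independence enters in an essential way, is verifying that $\pi_y \neq 0$ for every $y \in V(H) \setminus \{w\}$; without this, some non-orthogonality constraint for $u$ or $v$ would be unsatisfiable within $W$. Everything else is a straightforward genericity argument: a finite union of lines through the origin cannot exhaust the plane $W$, so an admissible direction for $\phi(u)$ is readily produced, yielding the required representation of $\overline{G}$ in $\reals^3$ with pairwise linearly independent vectors.
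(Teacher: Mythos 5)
Your proof is correct and follows essentially the same route as the paper's: both select an orthogonal pair of new vectors inside the plane $\phi(w)^{\perp}$ and argue by genericity that the finitely many non-orthogonality and independence constraints each exclude only finitely many directions, so an admissible choice exists. The only difference is that the paper obtains the needed non-degeneracy (a vector in $\phi(w)^{\perp}$ not orthogonal to any other vector of the representation) by citing Observation 1.3 of \cite{OR}, whereas you derive it directly from the pairwise linear independence hypothesis via the nonvanishing projections $\pi_y$.
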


\subsection{Applications of Proposition \ref{ears}}
Our motivation for Proposition \ref{ears} was to calculate the minimum vector rank of graphs defined in terms of cycles. The first family of graphs we look at are those in which no vertex lies on more than one cycle. These graphs are called {\em necklaces} in Section \ref{NecklaceSection} and include trees, cyclic, and unicyclic graphs; a generic example is shown in Figure \ref{N}. They can be built up step by step by either adding a pendant vertex adjacent to an existing vertex or building a cycle by adding ears between an existing pair of vertices. Hence, we can use Proposition \ref{ears} to establish a complement formula.
 
\begin{proposition}\label{mvr leq 3}
Let $N$ be a connected graph on at least two vertices such that no vertex lies on more than one cycle. Then \bee
\mvr(\overline{N})=\left\{\begin{array}{ll}
4 & \textrm{if $\overline{L_4}$ is an induced subgraph}\\
2 & \textrm{if $N =C_4$ or $N = K_{1,n}$ for } n\ge 1\\
3 & \textrm{otherwise}
\end{array}\right.\eee
\end{proposition}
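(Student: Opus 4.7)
The plan is to address the three cases separately, proving lower and upper bounds for each. For the lower bounds: if $\overline{L_4}$ is induced in $N$ then $L_4$ is induced in $\overline{N}$, so by monotonicity of $\mvr$ on induced subgraphs together with Proposition \ref{U comp.} we have $\mvr(\overline{N}) \ge \mvr(L_4) = 4$. The values $\mvr(\overline{K_{1,n}}) = 2$ and $\mvr(\overline{C_4}) = 2$ come directly from Propositions \ref{mvr tree comp.} and \ref{mvr comp. cycle}. In the generic case one needs $\mvr(\overline{N}) \ge 3$; this follows whenever $N$ contains an induced $P_4$, via self-complementarity of $P_4$ and $\mvr(P_4) = 3$. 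For the few connected necklaces that are $P_4$-free (cographs) but neither $K_{1,n}$ nor $C_4$—essentially $C_3$ possibly decorated with pendants or a triangle at a central vertex—one checks directly that each admits a universal vertex, which makes $\overline{N}$ disconnected with a non-trivial component of $\mvr \ge 2$, and hence $\mvr(\overline{N}) \ge 3$.

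The main task is the upper bound $\mvr(\overline{N}) \le 3$ in the generic case. I would first make the key structural observation that under the hypotheses (no induced $\overline{L_4}$ and $N \ne C_4$) the graph $N$ contains no 4-cycles at all: any 4-cycle of $N$ with a vertex of degree $\ge 3$ would induce $\overline{L_4}$, and otherwise all four cycle vertices would have degree $2$, so connectedness would force $N = C_4$. Hence every cycle of $N$ has length $3$ or at least $5$. I would then induct on the number of blocks of $N$, proving the stronger statement that $\overline{N}$ admits an orthogonal representation in $\reals^3$ consisting of pairwise linearly independent vectors. The base case (trees and unicyclic graphs) is covered by Propositions \ref{mvr tree comp.} and \ref{U comp.} combined with the refined representations noted after Proposition \ref{mvr comp. cycle}. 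For the inductive step, pick a leaf block $B$ of $N$ attached at a cut vertex $v$ and let $N'$ be the subgraph obtained by deleting the non-$v$ vertices of $B$: if $B$ is a bridge, reattach its pendant using Theorem 2.1 of \cite{OR}; if $B$ is a cycle of length $m \ge 5$, rebuild it by adding its $m-1$ new vertices as pendants alternately from each side of $v$ along the cycle and closing with Lemma \ref{prop:degree2}, essentially the proof of Proposition \ref{ears} adapted to closed ears; if $B$ is a triangle, invoke Lemma \ref{3cycleslemma} when $v$ is pendant in $N'$, and otherwise use a short generic-position argument within the 2-dimensional subspace $\phi(v)^\perp \subset \reals^3$ to choose the orthogonal vectors for the triangle's two new vertices so as to avoid the finitely many forbidden 1-dimensional subspaces $\phi(x)^\perp \cap \phi(v)^\perp$ determined by already-placed vectors.

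The Case 1 upper bound $\mvr(\overline{N}) \le 4$ proceeds by the same inductive scheme carried out in $\reals^4$, starting from a four-dimensional representation of an embedded $\overline{L_4}$ and extending using four-dimensional analogs of the same attachment lemmas. The step I expect to be the main obstacle is the triangle-attachment at a non-pendant cut vertex: Lemma \ref{3cycleslemma} as stated requires $v$ to be pendant in $N'$, so the induction must either be reorganized (adding 3-cycles while their attachment vertex is still pendant) or, more robustly, rely on the generic-position extension sketched above, and verifying that this extension preserves the pairwise-linear-independence invariant throughout the whole construction is the most delicate piece.
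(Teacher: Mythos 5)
Your overall strategy---lower bounds by exhibiting induced subgraphs, and an upper bound by a block-by-block induction that maintains a pairwise linearly independent representation of the complement in $\reals^3$ using Theorem 2.1 of \cite{OR}, Lemma \ref{prop:degree2} for closing long ears, and Lemma \ref{3cycleslemma} for triangles---is essentially the paper's proof (which appears both in the main text, via Proposition \ref{ears}, and vertex-by-vertex in the Appendix). Your handling of the lower bound $\mvr(\overline{N})\ge 3$ via induced $P_4$'s and the classification of $P_4$-free necklaces is in fact more explicit than the paper's.

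There is, however, a genuine gap in your ``key structural observation.'' It is not true that a $4$-cycle of $N$ containing a vertex of degree at least $3$ induces a copy of $\overline{L_4}$: as the paper's Figure \ref{unicyclic4cycle} and the proof of Proposition \ref{U comp.} make clear, $\overline{L_4}$ is a six-vertex graph (a $4$-cycle with extra neighbors attached at two \emph{adjacent} cycle vertices), and a unicyclic graph with a $4$-cycle avoids an induced $\overline{L_4}$ exactly when some pair of diagonally opposite cycle vertices both have degree $2$. Thus a necklace such as $C_4$ with a single pendant vertex, or with trees attached at one or at two opposite cycle vertices, contains a $4$-cycle, contains no induced $\overline{L_4}$, and is neither $C_4$ nor a star; the proposition asserts $\mvr(\overline{N})=3$ for it, but your induction never reaches it, since you have reduced to cycles of length $3$ or at least $5$. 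The missing step is the paper's duplicate-vertex reduction: in this situation each $4$-cycle has a diagonally opposite pair of degree-$2$ vertices, which are duplicate vertices in $\overline{N}$; deleting one from each pair produces a $4$-cycle-free necklace, and the deleted vertices are reinstated with copies of their duplicates' vectors without changing $\mvr(\overline{N})$. The same omission affects your Case 1 plan, since a necklace with an induced $\overline{L_4}$ may contain further $4$-cycles that support no such copy, and the ``four-dimensional analogs of the attachment lemmas'' you invoke are not established anywhere; the paper instead subdivides one edge of every $4$-cycle to obtain a $4$-cycle-free necklace $N'$ with $\mvr(\overline{N'})=3$, embeds that representation in $\reals^4$ with fourth coordinate zero, and generically perturbs the fourth coordinates of the two endpoints of each subdivided edge to restore the required orthogonality while preserving all required non-orthogonalities.
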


\begin{proof}
The proposition has already been shown to be true for trees and unicyclic graphs. 
The proof then proceeds by induction on the number of cycles. Let us assume that the formula works for necklaces with fewer than $c \ge 2$ cycles. 

Let $N$ be a necklace with $c$ cycles which has no induced 4-cycles, and let $u$ be a cut-vertex such that $(N-u) = N' \cup F$ is the union of a necklace $N'$ on $c-1$ cycles and a forest $F$. (Such a cut-vertex must exist; its cycle is the analogue of a leaf in this ``tree of cycles.'') By assumption, $\mvr(\overline{N'}) = 3$. If $u$ is on a 3-cycle, we can apply Lemma \ref{3cycleslemma} to extend the representation of $\overline{N'}$ to the entire cycle. If $u$ is on a larger cycle, we can start with the representation of $\overline{N'}$ and use Theorem 2.1 from \cite{OR} to extend the representation and show that $\mvr(\overline{N'+u+v}) = 3$. We can then reconstruct the cycle by drawing a long ear from $v$ back to $u$ and use Proposition \ref{ears} to show that the minimum vector rank is not increased. Finally, we can reinstate the rest of the forest $F$, one pendant vertex at a time, without increasing the minimum vector rank until we have reconstructed our original graph $N$. Thus, $\mvr(\overline{N}) = 3$ for all necklace graphs which are 4-cycle-free. 

Now, suppose that $N$ is a necklace on $c\ge 2$ cycles which contains 4-cycles but no induced copies of $\overline{L_4}$. Repeating the argument from the proof of Proposition \ref{U comp.}, every 4-cycle contains a pair of diagonally opposite vertices of degree 2. Removing one vertex from each such pair yields a necklace $N'$ with no 4-cycles and hence $\mvr(\overline{N'}) \le 3$; and since $N$ has more than one cycle, $N'$ cannot be a star, which implying that $\mvr(\overline{N'}) \ge 3$. We then reinstate the duplicate vertices by duplicating its vector assignment and finding that $\mvr(\overline{N}) = 3$.

Finally, we consider the case where $N$ has an induced $\overline{L_4}$ subgraph. This immediately implies that $\mvr(\overline{N})  \ge 4$. We will show that $\mvr(\overline{N}) \le 4$ by building with a 4-cycle-free necklace. For each 4-cycle in $N$, split one of its edges into two by introducing a new vertex in the middle: $uv \rightarrow \{uw,wv\}$. This turns every four-cycle into a five-cycle and creates a new necklace $N'$ which is 4-cycle-free and which has $\mvr(\overline{N'}) = 3$. Now, embed this three-dimensional vector representation of $\overline{N}$ in $\reals^4$ by setting the fourth coordinate equal to zero. In order to make this a valid  vector representation of the original necklace $N$, for each 4-cycle $C_i$, we need to adjust the vectors assigned to our vertices $u_i$ and $v_i$ by assigning them respective fourth coordinates $x_i$ and $y_i$ so that $x_iy_i = -\bra {\bf u}_i, {\bf v}_i\ket \ne 0$. Since we have infinitely many independent choices for each pair $(x_i,y_i)$, we can find a labeling such that  $\bra {\bf u}_i,  {\bf v}_j \ket \ne 0$ for $i \ne j$. This guarantees that a representation can be found in $\reals^4$, which implies that $\mvr(\overline{N}) = 4$. \end{proof}

A different generalization of unicyclic graphs are those in which all induced cycles share a single common edge. These are called books in Section \ref{BookSection}. 

\begin{defn}\rm A {\em book} is a connected graph which has a unique edge $e$ such that the intersection of any two induced cycles of $B$ is exactly the edge $e$ and its vertices. 
\end{defn}
This definition is discussed more in Section \ref{BookSection}. Note that the uniqueness of $e$ implies that $B$ contains at least two cycles. 
\begin{proposition}\label{mvr comp. B-v}
For a book $B$ as defined above, 
\bee
\mvr(\overline{B})=\left\{\begin{array}{ll}
4 & \textrm{if either $C_4$ or the kite $\kappa$ is an induced subgraph of } B\\
3 & \textrm{otherwise}
\end{array}\right.
\eee
\end{proposition}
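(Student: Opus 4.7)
The plan is to mirror the approach of Proposition 4.5: ear-decomposition for the upper bound combined with explicit induced subgraphs of $\overline{B}$ for the lower bound. I would first record the structural observation that a book $B$ decomposes as a spine $e=uv$ together with several \emph{pages}, each being a path from $u$ to $v$ internally disjoint from the rest of $B$; a page of length $k$ together with the spine bounds the unique induced $(k{+}1)$-cycle to which that page contributes. In this language, an induced $C_4$ in $B$ is equivalent to the existence of a page of length $3$, while the kite $\kappa$ appears as an induced subgraph precisely in a specific short-page configuration.

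For the ``otherwise'' upper bound $\mvr(\overline{B}) \le 3$, the hypotheses rule out length-$3$ pages and force all but at most one page to have length at least $4$. I would construct a $3$-dimensional orthogonal representation of $\overline{B}$ consisting of pairwise linearly independent vectors by initializing on the spine together with one chosen page---an induced cycle $C_m$ with $m \ne 4$, whose complement admits such a representation by Proposition 4.2---and then attaching each remaining page as an ear of length at least $4$ using Proposition 3.5. Any pendant trees off the book are absorbed one vertex at a time by Theorem 2.1 of \cite{OR}. The matching lower bound $\mvr(\overline{B}) \ge 3$ follows because $\overline{B}$ is never a disjoint union of at most two cliques: that would force $B$ to be complete multipartite on at most two parts, which is inconsistent with a book's unique-spine property.

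For the upper bound $\mvr(\overline{B}) \le 4$ in the remaining cases, I would run essentially the same inductive construction in $\reals^4$, using the extra coordinate to absorb the short (length $2$ or $3$) ears that Proposition 3.5 cannot directly handle; justifying this short-ear attachment is the hard part of the proof and would likely be packaged as an auxiliary lemma in the spirit of Lemma 3.7, with its proof deferred to the Appendix. For the matching lower bound, when $B$ contains an induced $C_4$ on a length-$3$ page $u,a,b,v$, any other page of $B$ provides a vertex $z$ such that the induced subgraph of $\overline{B}$ on $\{u,v,a,b,z\}$ contains an induced $P_5$, giving $\mvr(\overline{B}) \ge \mvr(P_5) = 4$. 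When $B$ contains $\kappa$ but no induced $C_4$, an analogous $5$-vertex induced subgraph of $\overline{B}$ with minimum vector rank $4$ can be exhibited by direct verification, completing the argument.
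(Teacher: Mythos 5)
Your overall strategy (ears for the upper bounds, small induced subgraphs for the lower bounds) matches the paper's, but two of your steps fail as stated. First, in the ``otherwise'' case your structural claim is wrong: excluding the kite $\kappa$ and $C_4$ does \emph{not} force all but at most one page to have length at least $4$. The graph $B^3_m$ ($m\ge 2$ triangles sharing the spine, discussed at the end of Section \ref{BookSection}) is kite-free and $C_4$-free yet has $m$ pages of length $2$, and Proposition \ref{ears} requires at least three internal vertices, so it cannot attach these; neither can Lemma \ref{3cycleslemma} (which hangs a triangle off a pendant vertex) nor Lemma \ref{prop:degree2} (which requires the two attachment vertices to be pendant). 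The paper closes exactly this case by observing that kite-freeness forces every triangular apex to have degree $2$ in $B$ whenever there is more than one triangle, so that in $\overline{B}$ these apexes are pairwise duplicate vertices and may all be assigned the same vector after the rest of the representation is built. You need some version of this duplicate-vertex step. (Your justification of the easy bound $\mvr(\overline{B})\ge 3$ also rests on a false characterization of graphs with minimum vector rank $2$ --- $P_3$ is connected, not a clique, and has minimum vector rank $2$ --- but the bound itself follows at once from an induced $C_m$, $m\ne 4$, and Proposition \ref{mvr comp. cycle}.)

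Second, your lower bound in the $C_4$ case fails for the domino, the book with spine $uv$ and two length-$3$ pages $u,a,b,v$ and $u,c,d,v$. For either choice of $z$ on the second page, the subgraph of $B$ induced on $\{u,a,b,v,z\}$ is $C_4$ with a pendant vertex, whose complement is a triangle with a path of length two attached: a connected unicyclic graph with minimum vector rank $3$ and no induced $P_5$. Indeed \emph{every} five-vertex induced subgraph $H$ of the domino has $\mvr(\overline{H})\le 3$, since deleting a spine vertex instead leaves $P_5$, and $\mvr(\overline{P_5})=3$ by Proposition \ref{mvr tree comp.}. So no five-vertex witness exists and the domino must be certified on all six of its vertices; this is precisely why the paper's proof reduces to the four minimal configurations $\overline{L_4}$, the domino, $\overline{P_5}$, and $\kappa$ of Figure \ref{BookComps} and verifies each separately before growing the representation in $\reals^4$ one vertex at a time via Theorem 2.2 of the orthogonal-representation paper. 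Your $\reals^4$ upper-bound outline and the kite lower bound (where $\overline{\kappa}$ is itself a five-vertex tree of minimum vector rank $4$) are sound in spirit, but the $C_4$ lower bound needs the fuller case analysis.
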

\begin{proof}

Let $B$ be a book with at least two cycles which does not have any induced kite or 4-cycles. Start with a single 3-cycle (if there is one) or else any other induced unicyclic graph. From here, we can build $B$ up one cycle at a time by adding long ears using Propostion \ref{ears} to show that  $\mvr(\overline{B})=3$. If $B$ has multiple 3-cycles, then each contains the two binding vertices and a third vertex of degree 2 in $B$ (since $B$ is kite-free). In this case,  the set of ``third vertices'' in $\overline{B}$ consists of all duplicate vertices, and we can assign the same vector to each of them. 

The kite on five vertices is the only book whose complement is a connected tree. As such, its minimum vector rank is $|B|-1 = 4$, implying that $\mvr(\overline{B}) \ge 4$. Likewise, if $B$ has more than one cycle and contains $C_4$ as an induced subgraph, it implies that $B$ must have an induced subgraph isomorphic to at least one of the 4-cycle graphs shown in Figure \ref{BookComps}, each of which has complement minimum vector rank of 4. Since each of these four graphs has an orthogonal representation in $\reals^4$ which satisfies the conditions of Theorem 2.2 of \cite{OR}, we can build up the rest of $B$ one vertex at a time and maintain $\mvr(\overline{B})\leq 4$. This means that if $B$ contains either a kite or a four-cycle as a subgraph, then $\mvr(\overline{B}) = 4$.\end{proof}

\begin{figure}
\centering
\includegraphics[width=2cm,height=3cm]{L4_comp} \hfil \includegraphics[width=2cm,height=3cm]{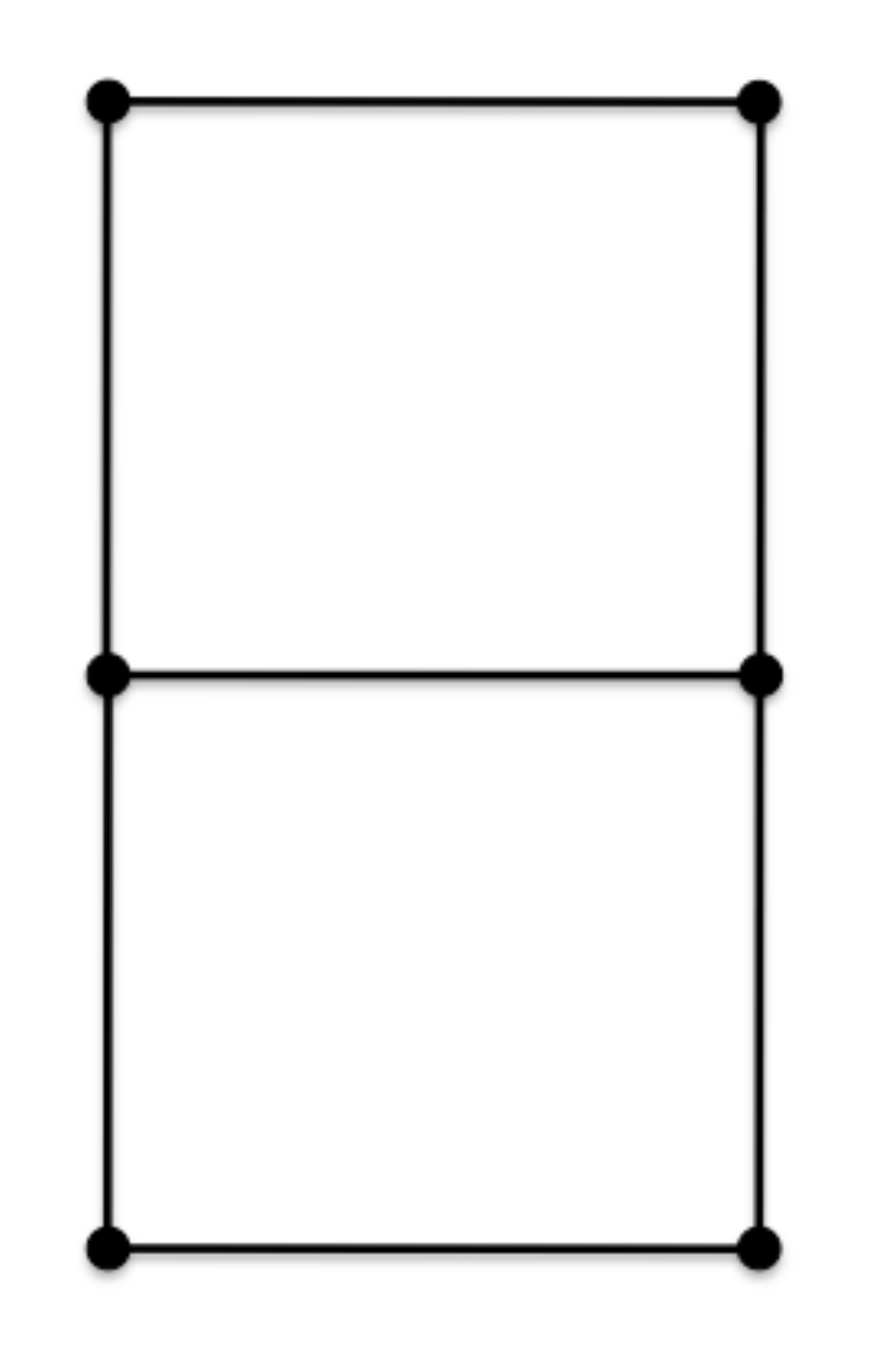} \hfil 
\includegraphics[width=2cm,height=2.5cm]{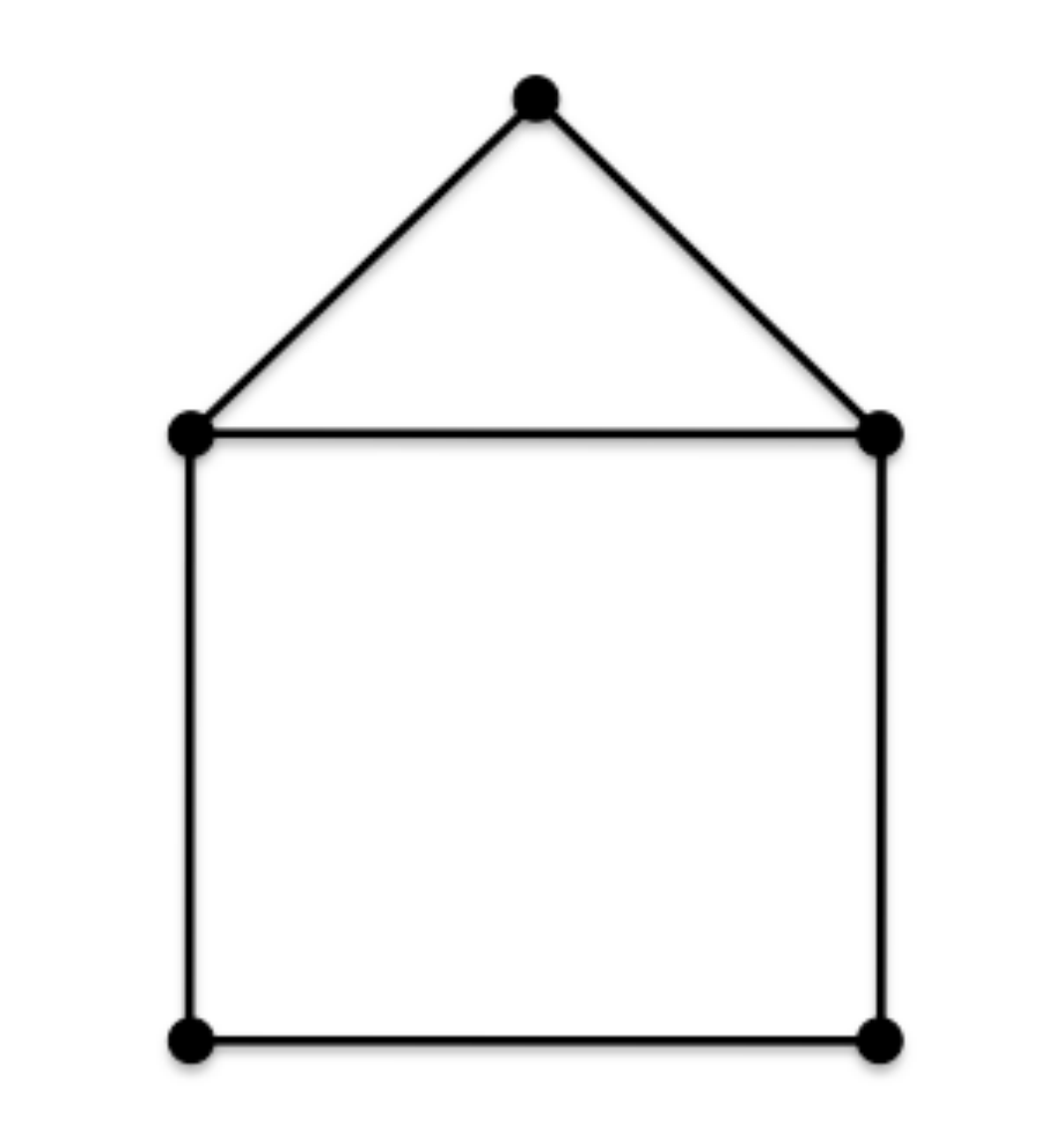}\hfil \includegraphics[width=3cm,height=3cm]{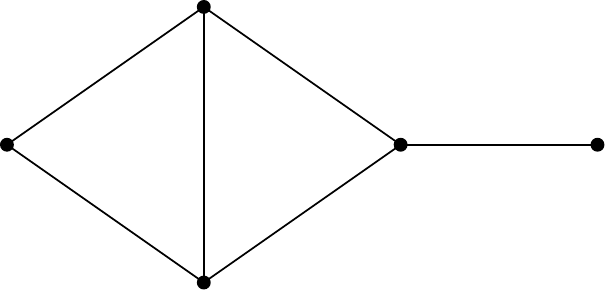} 
\caption{Books for which $\mvr(\overline{B}) = 4$: $\overline{L_4}$, Domino, $\overline{P_5}$, Kite $\kappa$}\label{BookComps}
\end{figure}

%%%%%%%%%%%%%%%%%%%%%%%%%%%%%%%%%%%%%%%%%%%%%%%%%%%%%%%%%%%%%

\section{Families of Complement Critical Graphs}\label{Families}
In an arbitrary graph $G$, it is not obvious in general whether it is complement critical. However, if the graph has nice properties,we can sometimes formulate simple criteria. This section focuses on identifying families of vector critical and complement critical graphs. The initial section shows that complement critical graphs which are not connected must be built from smaller complement critical graphs. We then derive necessary and sufficient conditions for certain types of graphs to be complement critical based on their induced cycles. 

Although the definition of complement critical graphs uses a property of {\em all} induced subgraphs of $G$, it is sufficient to consider only those induced by removing a single vertex of $G$, since $G$ is vector-critical if and only if $\mvr(G-v) < \mvr(G)$ for all vertices $v$. Likewise for complement criticality. We will make use of this fact repeatedly in what follows, writing $(G-v)$ to indicate the induced subgraph of $G$ on all of its vertices except $v$. 

\subsection{Disconnected graphs}
The first result shows that disconnected complement critical graphs are composed of complement critical components. This will allow us to focus the remainder of our work on connected graphs.
\begin{proposition}\label{G joint}
Let $G$ be a graph which is not connected. We can write $G=G_1 \cup {G_2}$ with $\mvr(\overline{G_1}) \le \mvr(\overline{G_2})$. Then $G$ is vector critical if and only if both $G_1$ and $G_2$ are vector critical. 

In addition, $G$ is complement critical if and only if the following three conditions are all satisfied:
\begin{itemize}
\item $\mvr(\overline{G_1})<\mvr(\overline{G_2})$
\item $G_1$ is vector critical.
\item $G_2$ is complement critical.
\end{itemize}
\end{proposition}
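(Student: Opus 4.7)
The plan is to reduce everything to a per-vertex analysis via Observation 2.2. Since $G = G_1 \cup G_2$, one has $\mvr(G) = \mvr(G_1) + \mvr(G_2)$, and using $\overline{G} = \overline{G_1} \vee \overline{G_2}$, $\mvr(\overline{G}) = \max(\mvr(\overline{G_1}), \mvr(\overline{G_2})) = \mvr(\overline{G_2})$. Both identities pass to $G-v$ since the disjoint-union structure is inherited, so the global question splits into a pair of computations, one for $v$ in each component.

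The vector critical equivalence is then immediate: for $v \in G_1$, $\mvr(G - v) = \mvr(G_1 - v) + \mvr(G_2)$, so $\mvr(G - v) < \mvr(G)$ iff $\mvr(G_1 - v) < \mvr(G_1)$, and symmetrically for $v \in G_2$. Ranging over all vertices yields the biconditional.

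For complement criticality I would case-split by the location of $v$. If $v \in G_1$, the chain $\mvr(\overline{G_1 - v}) \le \mvr(\overline{G_1}) \le \mvr(\overline{G_2})$ forces $\mvr(\overline{G - v}) = \mvr(\overline{G})$; the complement side is frozen, so the sum strictly decreases iff $\mvr(G_1 - v) < \mvr(G_1)$, which accounts for the second listed condition. If $v \in G_2$, I would further split on whether $\mvr(\overline{G_2 - v}) \ge \mvr(\overline{G_1})$ or not. In the first subcase, $\mvr(\overline{G - v}) = \mvr(\overline{G_2 - v})$, and the sum change for $G$ at $v$ matches exactly the complement critical condition for $G_2$ at $v$. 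In the second subcase, $\mvr(\overline{G - v}) = \mvr(\overline{G_1})$; here the strict inequality $\mvr(\overline{G_1}) < \mvr(\overline{G_2})$ is precisely the slack needed to guarantee strict decrease, and it also forces $\mvr(\overline{G_2 - v}) < \mvr(\overline{G_2})$, so $G_2$ is already complement critical at $v$. Assembling the subcases gives the ``if'' direction.

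The main obstacle is the ``only if'' direction, specifically deducing the strict inequality in condition (i). I would argue the contrapositive: if $\mvr(\overline{G_1}) = \mvr(\overline{G_2})$, then for every $v \in G_2$ one gets $\mvr(\overline{G - v}) = \mvr(\overline{G})$, so the sum change at $v$ reduces to $\mvr(G_2) - \mvr(G_2 - v)$; a vertex of $G_2$ witnessing complement-critical-but-not-vector-critical behavior then produces a vertex at which the sum in $G$ fails to strictly decrease, contradicting complement criticality of $G$. Complement criticality of $G_2$ itself falls out directly from the $v \in G_2$ analysis in either subcase --- in the first, the reduction to the $G_2$ condition is immediate, and in the second the strict drop in the complement already witnesses the condition.
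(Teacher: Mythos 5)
Your setup and case analysis follow the paper's proof almost exactly: both arguments reduce to removing a single vertex, use $\mvr(G)=\mvr(G_1)+\mvr(G_2)$ and $\mvr(\overline{G})=\max(\mvr(\overline{G_1}),\mvr(\overline{G_2}))$, observe that deleting $v\in G_1$ freezes the complement side, and split the $v\in G_2$ case according to which term achieves the max. The vector-critical biconditional, the ``if'' direction of the complement-critical claim, and the necessity of conditions (ii) and (iii) are all handled correctly.

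The genuine gap is in your ``only if'' argument for condition (i). Your contrapositive assumes that $G_2$ contains a vertex witnessing complement-critical-but-not-vector-critical behavior; if $G_2$ happens to be vector critical, no such witness exists and no contradiction arises. Your own case analysis in fact shows that when $\mvr(\overline{G_1})=\mvr(\overline{G_2})$, complement criticality of $G$ at a vertex $v\in G_2$ is equivalent to $\mvr(G_2-v)<\mvr(G_2)$, so the correct conclusion is disjunctive: $G$ is complement critical iff $G_1$ is vector critical and \emph{either} $G_2$ is vector critical \emph{or} conditions (i) and (iii) both hold. The implication you are asked to prove is actually false as stated: take $G=K_1\cup K_1=\overline{K_2}$, which is vector critical (hence complement critical) even though $\mvr(\overline{G_1})=\mvr(\overline{G_2})=1$; more generally any $\overline{K_n}$, $n\ge 2$, split into two vector-critical pieces gives the same failure. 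You should not feel that you missed a trick here --- the paper's own proof quietly ends with exactly this disjunction (``either $G_2$ is critical, or else $G_2$ is complement critical and $\mvr(\overline{G_1})<\mvr(\overline{G_2})$'') without reconciling it with the stated proposition, and the remark following the proposition (that $G$ cannot be complement critical when $\mvr(\overline{G_1})=\mvr(\overline{G_2})$) fails on the same examples. The right move is to prove the disjunctive version and note that it coincides with the stated one precisely when $G_2$ is not vector critical.
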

Note that there is a possible ambiguity in the labeling of $G_1$ and $G_2$ if  $\mvr(\overline{G_1}) = \mvr(\overline{G_2})$; but in this case, $G$ cannot be complement critical. 
\begin{proof} We proceed by removing a single vertex $v$ and looking at the induced subgraph.  This vertex may be removed from $G_1$ or from $G_2$. 

In the case where $v$ is removed from $G_1$, we note that  $\mvr(\overline{G_1-v}) \le \mvr(\overline{G_1}) \le \mvr(\overline{G_2})$, which implies that $\mvr(\overline{G-v}) = \mvr(\overline{G_2})=\mvr(\overline{G})$. This means that removing a vertex from $G_1$ will never reduce the minimum vector rank of $\overline{G}$. Likewise, we can see that 
\bee \mvr(G) - \mvr(G-v) = \mvr(G_2) = \mvr(G_1) -  \mvr(G_1-v) \eee

Therefore, $\mvr(G-v) + \mvr(\overline{G-v}) < \mvr(G) + \mvr(\overline{G})$ if and only if $\mvr(G_1) >\mvr(G_1-v)$. This condition holds  for all vertices $v$ of $G_1$ if and only if $G_1$ is critical. 

In the case where $v$ is removed from $G_2$, we can calculate that  $\mvr(\overline{G-v}) = \max(\mvr(\overline{G_1}),\mvr(\overline{G_2-v}))$.  This means that $\mvr(\overline{G-v}) = \mvr(\overline{G})$ if and only if either $\mvr(\overline{G_1}) = \mvr(\overline{G_2})$ or $\mvr(\overline{G_2}) = \mvr(\overline{G_2-v})$. 

So, in order for $G$ to be complement critical, either $\mvr(G_2 -v) < \mvr(G_2)$ for all $v$ (i.e., $G_2$ is critical), or else we need $G_2$ to be complement critical {\em and} have $\mvr(\overline{G_1}) < \mvr(\overline{G_2})$. \end{proof}

By repeated application of this result, we can see that every connected component of a complement critical graph must be complement critical; and, in fact, all but one of them must be vector critical as well. This allows us to focus our remaining results on graphs for which both $G$ and $\overline{G}$ are connected. 

\subsection{Trees}\label{trees}
Trees are the simplest connected graphs, and it is straightforward to characterize them in terms of criticality:

\begin{proposition}\label{Tree, Critical}
Any tree is complement critical.  Any tree which is not a star is vector-critical.
\end{proposition}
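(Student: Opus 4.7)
The plan is to invoke the identity $\mvr(T) = n - 1$ for any tree $T$ on $n \geq 2$ vertices; the upper bound is witnessed by the edge-indicator representation $\phi(v) = \sum_{u \sim v} e_{uv} \in \reals^{|E(T)|}$, and the matching lower bound follows from the standard equality $\mr_+(T) = n - 1$ together with Observation 2.3. More generally, the union formula in Observation 2.4 gives that a forest $F$ on $n$ vertices with $k$ connected components, of which $j$ are isolated vertices, satisfies $\mvr(F) = n - k + j$.

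First I would establish the vector-criticality dichotomy. Fix a tree $T$ on $n \geq 3$ vertices and a vertex $v$. If $v$ is a leaf, then $T - v$ is a tree on $n - 1$ vertices, so $\mvr(T - v) = n - 2 < n - 1 = \mvr(T)$. If $v$ is an internal vertex of degree $k \geq 2$, write $j$ for the number of leaf-neighbors of $v$, which equals the number of isolated components of $T - v$; the forest formula then gives $\mvr(T - v) = (n - 1) - (k - j)$. This is strictly less than $n - 1$ unless $j = k$, and $j = k$ forces every neighbor of $v$ to be a leaf, that is, $T = K_{1, n-1}$ with $v$ its center. Conversely, removing the center of a star $K_{1, n-1}$ (with $n \geq 2$) yields $(n - 1) K_1$, whose $\mvr$ equals $n - 1 = \mvr(T)$, so stars fail vector criticality. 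This proves the second sentence of the proposition.

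Finally I would deduce complement criticality. For a non-star tree, vector criticality gives $\mvr(T - v) < \mvr(T)$, and since $\overline{T - v}$ is an induced subgraph of $\overline{T}$, Observation 2.4 supplies $\mvr(\overline{T - v}) \leq \mvr(\overline{T})$; summing yields the required strict drop. For a star $T = K_{1, n-1}$ with $n \geq 2$, Proposition \ref{mvr tree comp.} gives $\mvr(T) + \mvr(\overline{T}) = (n - 1) + 2 = n + 1$. Removing the center leaves $(n - 1) K_1$ with complement $K_{n-1}$, summing to $(n - 1) + 1 = n$; removing a leaf leaves the smaller star $K_{1, n-2}$ (which collapses to $K_1$ when $n = 2$), whose complement sum again equals $n$. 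Both are strictly less than $n + 1$, so stars are complement critical; the case $n = 1$ is trivial. The one subtlety worth flagging is that an isolated-vertex component contributes $1$ rather than $0$ to $\mvr$, which is precisely why center-removal in a star fails to lower $\mvr(T)$; apart from quoting $\mvr(T) = n - 1$, no step appears to pose a real obstacle.
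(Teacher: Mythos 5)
Your proof is correct and takes essentially the same route as the paper: it rests on $\mvr(T)=n-1$, shows that deleting a vertex strictly drops $\mvr$ except when the deleted vertex is the center of a star, and rescues the star case by observing that the complement's minimum vector rank drops from $2$ to $1$. You are somewhat more explicit than the paper (the general forest formula and the separate leaf-removal check for stars), but the underlying argument is identical.
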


\begin{proof} If $T$ is a tree on $n$ vertices, then $\mvr(T)=n-1$; if we remove a single vertex $v$, then $\mvr(T-v) \le |T-v| -1 = n-2$ unless $(T-v)$ contains only isolated vertices. In this case, $T$ must be a star and $v$ must be the central vertex. 

Conclusion: If $T$ is not a star, then for any vertex $v$ $\mvr(T-v) \le n-2 < \mvr(T)$, so $T$ is vector-critical. 

If $T = K_{1,n-1}$ and the vertex being removed is the dominating vertex, then $\mvr(T-v) = \mvr(T)$ but $\mvr(\overline{T-v}) = \mvr(K_{n-1}) = 1$. Since $\mvr(\overline{T}) = 2$ by Proposition \ref{mvr tree comp.}, we see that a star is complement critical. \end{proof}

\subsection{Unicyclic Graphs}

Following the discussion of trees, which contain no cycles, it is natural to look at graphs which contain only one. For a connected unicyclic graph $U$, it was shown in \cite{Outerplanar} that $\mvr(U) = |U| - 2$. We use this result and Proposition \ref{U comp.} to characterize connected unicyclic graphs which are  complement critical.
\begin{proposition}\label{U Comp. Critical}
A connected unicyclic graph $U$ is complement critical if and only if at least one of the following three conditions is true:
\begin{enumerate}
\item{} Each vertex on the cycle of $U$ is adjacent to at least three vertices of degree greater than 1. In this case, $U$ is vector-critical. 
\item  $\overline{L_4}$ is an induced subgraph of $U$. 
\item  $U = S_n^3$, which is the graph formed from the star graph $K_{1,n-1}$ by connecting two of the leaves with an edge, as shown in Figure \ref{Sn3}. 
\end{enumerate}
\end{proposition}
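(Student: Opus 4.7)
The plan is to classify each vertex $v$ of $U$ by its role---leaf, internal non-cycle vertex, or cycle vertex---and examine the effect of removal on the sum $\mvr(U)+\mvr(\overline{U})$. Using $\mvr(U)=n-2$ from \cite{Outerplanar} and the formula $\mvr(F)=|F|-(\text{number of components with at least two vertices})$ for a forest $F$, removing any non-cycle vertex strictly decreases $\mvr(U)$: a leaf shrinks the unicyclic graph by one, and an internal non-leaf vertex disconnects at least one non-trivial subtree. Hence only cycle vertices can fail to strictly decrease the sum, and we may restrict attention to them.

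For a cycle vertex $v$, I would show $\mvr(U-v)=\mvr(U)-m(v)$, where $m(v)$ is the number of off-cycle neighbors of $v$ of degree greater than one: $U-v$ consists of one tree on the remaining cycle vertices and their attached subtrees (non-trivial since the cycle has length at least~$3$) plus $m(v)$ extra non-trivial components rooted at the non-leaf off-cycle neighbors of $v$ and some isolated vertices coming from the leaf off-cycle neighbors. Thus $\mvr(U-v)<\mvr(U)$ iff $v$ has at least one non-leaf off-cycle neighbor, iff $v$ has at least three neighbors of degree greater than one (the two cycle neighbors being automatic). This gives the vector-critical characterization and shows ``(1) $\Rightarrow$ complement critical.''

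Now assume (1) fails and pick a \emph{bad} cycle vertex $v$ (all off-cycle neighbors of $v$ are leaves). Then $U-v=T\cup sK_1$ where $T$ is a tree on the remaining cycle vertices with their attached subtrees and $s$ is the number of off-cycle leaves of $v$. Since $\overline{T\cup sK_1}=\overline{T}\vee K_s$ and the minimum vector rank of a join is the maximum of its pieces, $\mvr(\overline{U-v})=\mvr(\overline{T})$, which by Proposition~\ref{mvr tree comp.} equals $2$ when $T$ is a star and $3$ otherwise. Comparing with $\mvr(\overline{U})$ from Proposition~\ref{U comp.}: if $\mvr(\overline{U})=4$ (condition~(2)), then $\mvr(\overline{T})\le 3<4$ so the sum strictly decreases at every bad $v$, and $U$ is complement critical; if $\mvr(\overline{U})=2$ then $U=C_4$ and $\mvr(\overline{T})\ge 2$ on any tree of size at least two blocks any decrease, so $C_4$ is not complement critical.

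The remaining case, $\mvr(\overline U)=3$, is the heart of the argument: we need $T$ to be a star at every bad cycle vertex. I would case on the cycle length $g$. For $g\ge 5$, $T$ contains the path on $g-1\ge 4$ cycle vertices left by breaking the cycle at $v$, so its diameter is at least~$3$ and it is never a star; hence no complement critical graphs arise beyond those with (1). For $g=4$, the diagonally opposite degree-$2$ pair guaranteed by Proposition~\ref{U comp.} makes $T$ a $P_3$ decorated with subtrees at its two endpoints, which is a star only when both decorations are empty, forcing $U=C_4$ and contradicting $\mvr(\overline{U})=3$. For $g=3$, let $a,b,c$ be the off-cycle leaf counts at the three cycle vertices; first supposing every cycle vertex is bad (no deeper off-cycle subtrees), each residual $T$ is a $P_2$ decorated with leaves at its endpoints, a star iff at least one endpoint carries no leaves, and requiring this simultaneously at all three removals forces at least two of $\{a,b,c\}$ to vanish---precisely $U=S_n^3$. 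If instead some cycle vertex carries a non-leaf off-cycle subtree, the residual $T$ at any bad vertex inherits two distinct vertices of degree at least two (the cycle vertex attached to the subtree and a non-leaf neighbor inside it), so $T$ cannot be a star. The main obstacle is exactly this $g=3$ accounting: combining the star condition on all three residual trees to extract $S_n^3$, and then ruling out deeper off-cycle structure by analyzing the diameter of the residual tree.
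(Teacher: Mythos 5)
Your proposal is correct and follows essentially the same strategy as the paper's proof: reduce to cycle vertices via $\mvr(U)=|U|-2$ and the forest formula, observe that removing a cycle vertex drops $\mvr(U)$ exactly when it has an off-cycle neighbor of degree greater than one, and otherwise force the residual tree to be a star via Propositions \ref{mvr tree comp.} and \ref{U comp.}. The only difference is cosmetic: where the paper extracts $U=S_n^3$ by a symmetric argument on two bad cycle vertices and their degrees, you do explicit casework on the girth, which is somewhat longer but arguably tightens the paper's terse final step.
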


\begin{figure}[h]
\centering
\includegraphics[width=4cm,height=3cm]{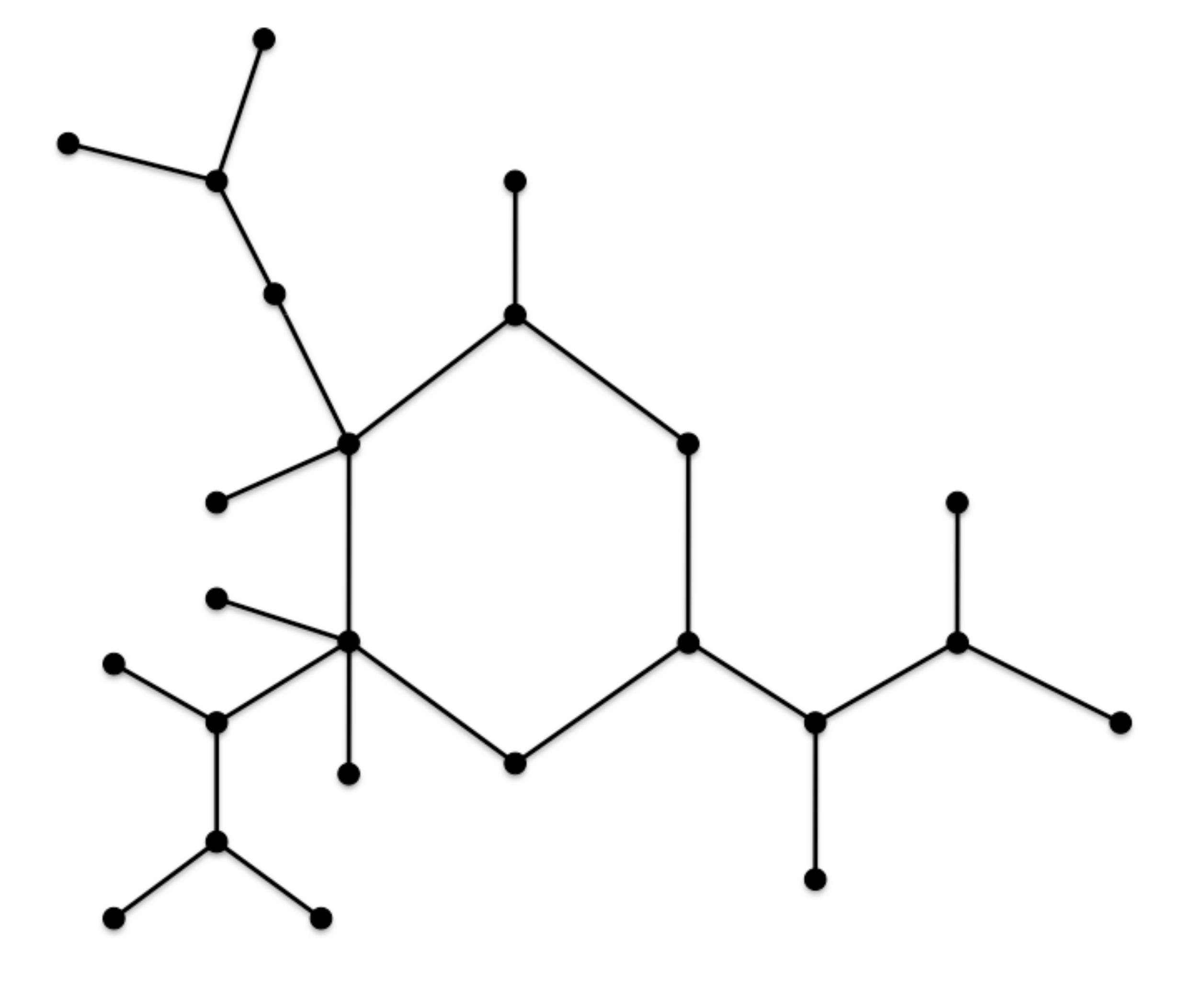} \qquad 
\includegraphics[width=2cm,height=2cm]{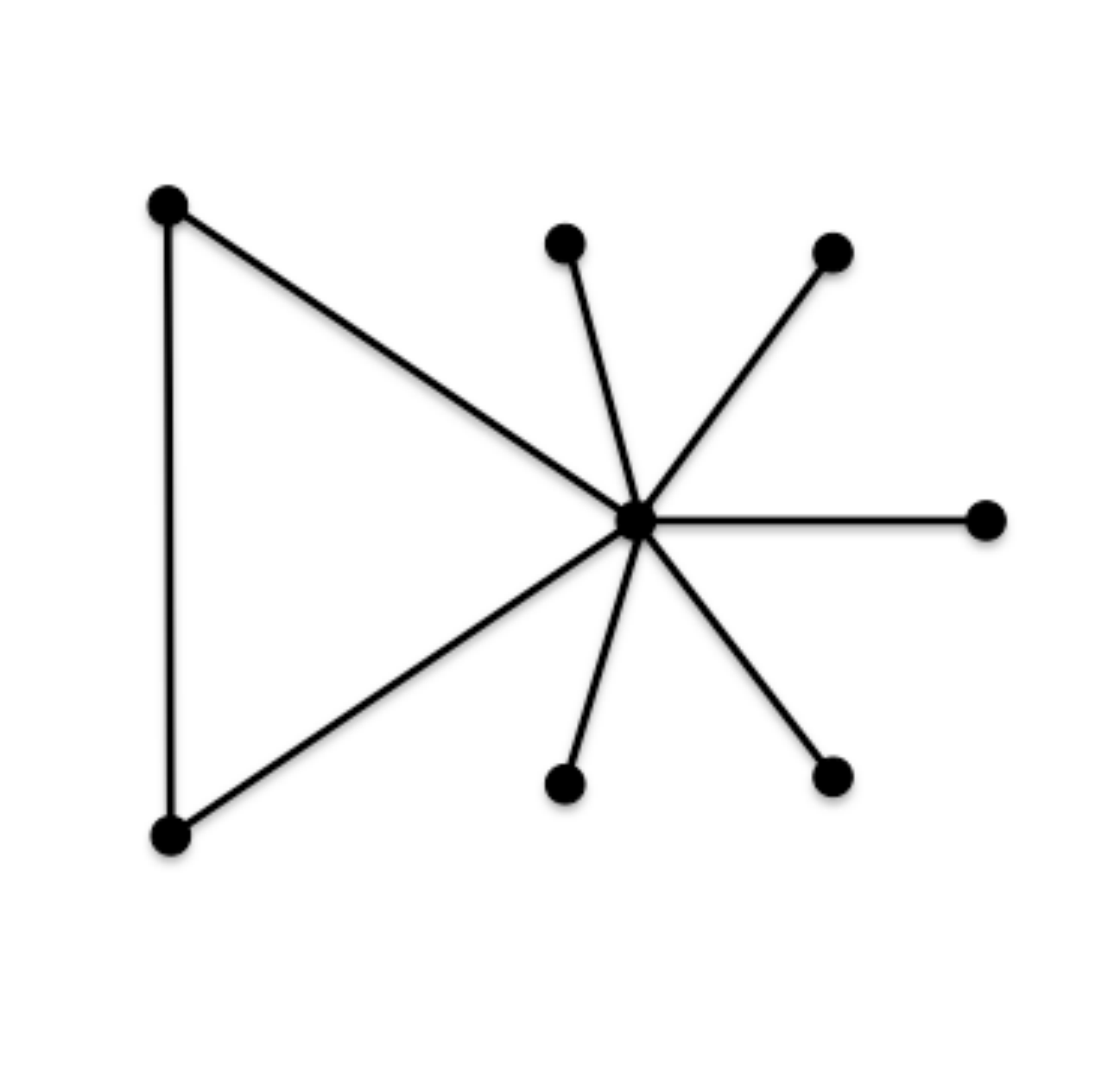}
\caption{Generic unicyclic graph (left) and the graph $S_8^3$ (right) }
\label{Sn3}
\end{figure}

\begin{proof}
We first observe that if $v$ is not on the cycle of $U$, then $(U-v)$ is the union of a connected unicyclic graph $U'$ and a (possibly empty) forest $F$. This implies that 
\bee
\mvr(U-v) = \mvr(U') + \mvr(F)  \le  |U'| - 2 + |F| = |U| - 3 
\eee
Thus, removing a vertex not on the cycle will always reduce $\mvr(U)$.  This reduces the question of $U$'s complement criticality to what happens when you remove a vertex on the cycle. To do this, we can check directly that each of the three conditions implies that $U$ is complement critical:

Condition (1) implies that for every $v$ on the cycle, $(U-v)$ is a forest containing at least two proper trees. This means that $\mvr(U-v) \le  |U-v| - 2  <  \mvr(U)$. 

Condition (2) implies that $\mvr(\overline{U}) = 4$. If $v$ is on the cycle, then $(U-v)$ is a forest and $\mvr(\overline{U-v}) \le 3 < \mvr(\overline{U})$. 

Finally, (3) implies that if $v$ is on the cycle, then either $(U-v) = K_{1,n-2}$ or $(U-v) = \overline{K_{n-1}}$. In either case, $\mvr(\overline{U-v}) \le 2 < \mvr(\overline{U})$. 

We conclude that any one of these three conditions implies that $U$ is complement critical. Now, we wish to show that they are in fact necessary.  Assume that $U$ is a connected unicyclic graph which is complement critical for which the conditions (1) and (2) do not hold. We wish to show that $U = S_n^3$ for some $n$.

%The only case where a vertex removal will decrease the $\mvr$ is when the vertex $v$ is on the cycle.That being said, it will be the only case considered. 

Since condition (1) does not hold, there is at least one vertex $v$ on the cycle for which $(U-v)$ is the union of a tree $T$ plus a collection of $r \ge 0$ isolated vertices.  This means that $\mvr(U-v) = |T| - 1 + r = |U| - 2 = \mvr(U)$. By assumption, $U$ is complement critical, so $\mvr(\overline{U-v}) < \mvr(\overline{U})$. Condition (2) does not hold, which means that $\mvr(\overline{U})\leq 3$ by Proposition \ref{U comp.}. This forces $\mvr(\overline{U-v}) \le 2$. Proposition \ref{mvr tree comp.} then tells us that $T$ must be a star.

Let $u$ be the center vertex of $T$, and let $w \notin \{u,v\}$ be any vertex from the cycle in $U$. $w$ is necessarily a leaf in $T$, which means that  $\deg(w) \le 2$ in $U$. This means that $w$ does not satisfy condition (1), so we can repeat our argument (interchanging the roles of $v$ and $w$) to show that $(U-w)$ must also include a star with $v$ as a leaf and that $\deg(v)  \le 2$. Since $v$ and $w$ are degree two vertices which are adjacent to each other and to $u$, we see that $U$ must be built on a 3-cycle and that only one of these three vertices can have degree greater than 2, leading us to conclude that $U = S_n^3$ for some $n \ge 3$. 

Conclusion: If $U$ is complement critical and neither condition (1) nor (2) holds, then condition (3) must be true. This gives us the biconditional in the proposition. \end{proof}

As an immediate consequence, we see that a cyclic graph $C_n$ is complement critical if and only if $n = 3$. The three-cycle $C_3 =S_3^3$ is a degenerate form of Figure \ref{Sn3}; its complement $\overline{C_3} = 3K_1$ is vector critical.

\subsection{Necklaces}\label{NecklaceSection}
The next type of graph we consider is a generalization of trees and unicylic graphs. We define a {\em necklace} to be a connected graph with in which each vertex belongs to at most one cycle, as shown in Figure \ref{N}. A proper necklace must have at least two vertices. Graphs in which each {\em edge} appears in at most one cycle are called cactus graphs and are well-studied \cite{West}, but there does not appear to be as much attention to graph in which no two cycles intersect even on the vertices. Necklaces can be thought of as a ``tree of cycles'', since we can form a tree by collapsing the vertices of each cycle into a single vertex. Such graphs are planar and, since each cycle is actually an induced cycle, there are no internal vertices and necklace graphs are outerplanar. The infinite face of this graph touches every vertex. This will allow us to use results from \cite{Outerplanar}. 

\begin{figure}[h]
\centering
\includegraphics[width=7cm,height=5.5cm]{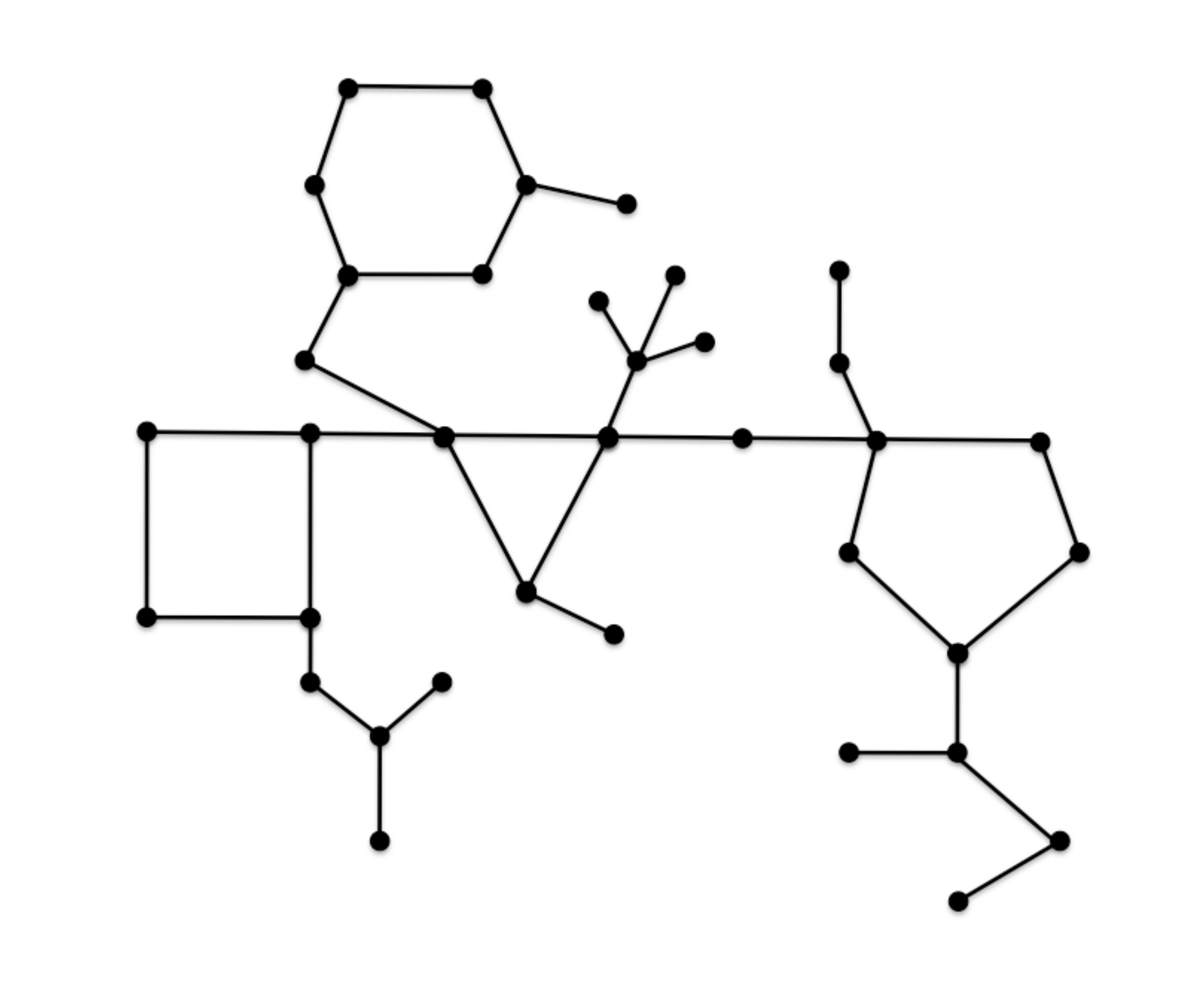}
\caption{A necklace graph with four cycles}
\label{N}
\end{figure}

In order to characterize complement critical necklaces, it will be useful to have formulas for the minimum rank. The minimum rank of the complement was described in Proposition \ref{mvr leq 3}, while the minimum rank of a necklace itself generalizes the formula for trees and unicyclic graphs:
\begin{proposition}\label{necklaceformulas}
If $N$ is a proper necklace, then $\mvr(N) =|N| - c -1$. 
\end{proposition}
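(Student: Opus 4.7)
The plan is to induct on $|N|$, peeling off one block of $N$ at a time using its block-cut tree structure. The two base cases are $P_2$ (a single edge, $c=0$, $\mvr=1$) and a single cycle $C_k$ with $k\geq 3$ (where $c=1$ and the known formula gives $\mvr(C_k)=k-2$); both satisfy $\mvr(N)=|N|-c-1$.

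For the inductive step, observe that every block of a necklace is either a bridge or an induced cycle. If $N$ has a pendant vertex $v$ attached to a cut-vertex $u$, the cut-vertex reduction formula for the minimum positive semidefinite rank gives $\mr_+(N)=\mr_+(N-v)+\mr_+(K_2)=\mr_+(N-v)+1$, and since both $N$ and $N-v$ are connected we have $\mvr=\mr_+$, so the inductive hypothesis yields
\bee
\mvr(N) = \bigl((|N|-1)-c-1\bigr)+1 = |N|-c-1.
\eee
Otherwise $N$ has no pendant vertex, so every leaf of the block-cut tree is a cycle; pick such a leaf cycle $C$ attached at a unique cut-vertex $u$, and set $N' = N - (V(C)\setminus\{u\})$. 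Then $|N'|=|N|-|C|+1$ and $N'$ is a necklace with $c-1$ cycles, and applying the cut-vertex formula at $u$ gives
\bee
\mvr(N) = \mvr(N') + \mvr(C) = \bigl(|N'|-(c-1)-1\bigr)+(|C|-2) = |N|-c-1.
\eee

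For the upper bound one can also supply an explicit construction: given representations $\phi'\colon V(N')\to\reals^{|N'|-c}$ and $\phi_C\colon V(C)\to\reals^{|C|-2}$, define $\phi(w)=(\phi'(w),\mathbf{0})$ for $w\in N'\setminus\{u\}$, $\phi(v)=(\mathbf{0},\phi_C(v))$ for $v\in C\setminus\{u\}$, and $\phi(u)=(\phi'(u),\phi_C(u))$. A direct inner-product check confirms this is an orthogonal vector representation of $N$ in $\reals^{|N|-c-1}$: cross terms between $N'$- and $C$-vertices vanish by construction (matching their non-adjacency in $N$), while the sub-representations within each piece are preserved. The substantive obstacle is the matching lower bound $\mvr(N)\geq|N|-c-1$, which rests on the cut-vertex reduction $\mr_+(G_1\oplus_u G_2)=\mr_+(G_1)+\mr_+(G_2)$ for two connected graphs glued at a cut-vertex. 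This identity is standard in the minimum semidefinite rank literature and is implicit in the outerplanar techniques of \cite{Outerplanar} already invoked by the paper in the unicyclic case; iterating it through the block-cut tree drives the induction for arbitrary $c$.
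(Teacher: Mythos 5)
Your proof is correct, but it takes a genuinely different route from the paper. The paper does not induct on blocks at all: it invokes the theorem of \cite{Outerplanar} that for a connected outerplanar graph $\mvr(N)=\mr_+(N)=|N|-T(N)$, where $T(N)$ is the tree cover number, and then computes $T(N)=c+1$ directly by a counting argument (deleting two cycle-edges per cycle gives a cover by $c+1$ induced trees; conversely any tree cover omits at least two edges per cycle, and a forest of $k$ trees on $|N|$ vertices has $|N|-k$ edges, forcing $k\ge c+1$). Your argument instead peels off leaf blocks of the block-cut tree and uses the cut-vertex reduction $\mr_+(G_1\oplus_u G_2)=\mr_+(G_1)+\mr_+(G_2)$, together with the base values $\mvr(P_2)=1$ and $\mvr(C_k)=k-2$; the arithmetic in both branches of the induction checks out, and your explicit padded construction for the upper bound is a correct (if redundant, given the reduction formula) verification. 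The trade-off: your approach needs only the standard cut-vertex additivity of the minimum semidefinite rank and is self-contained beyond that, whereas the paper leans on the heavier outerplanarity machinery but gets the formula in one stroke without induction. One caveat on attribution: the cut-vertex reduction is not really ``implicit in'' \cite{Outerplanar}; it is a separate standard result (see the surveys \cite{AS,SII} and the references therein), and you should cite it as such rather than folding it into the outerplanar techniques. You should also make explicit that $N'$ in the leaf-cycle case has at least two vertices (otherwise $N$ is itself a cycle and you are in the base case), so that the inductive hypothesis genuinely applies.
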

Note that the formula specifies to unicyclic graphs and proper trees (with $c=1$ and $c=0$, respectively) but does not apply if $N = K_1$. The simplest proof of this formula uses the result from \cite{Outerplanar}, which states that since $N$ is outerplanar and connected, $\mvr(N) = \mr_+(N) = |N| - T(N)$, where $T(N)$ is the minimum number of vertex-disjoint induced trees of $N$ needed to cover all of its vertices. If we choose one vertex from each cycle and delete its two edges on the cycle, what remains is a forest consisting of $c+1$ induced trees. This is a tree cover, hence $T(N) \le c+1$. On the other hand, in any a tree covering of $N$, each cycle must be covered using at least two induced trees. As a result, the covering must omit at least two edges from each cycle (where the induced trees connect to each other). Since $N$ has $|N| - 1 +c$ edges, the tree covering has at most $|N| -1 - c$ edges. A union of $k$ trees covering $|N|$ vertices has exactly $|N| -k$ edges, which implies that any tree covering of $N$ uses at least $c+1$ trees.  Thus $T(N) = 1+c$ and $\mvr(N) = |N| - c- 1$. \cvd.

The following proposition gives the main result of this section.
\begin{proposition}\label{necklacecc}
A necklace $N$ is vector critical if and only  every vertex on a cycle of $N$ is adjacent to at least three vertices of degree greater than 1.

A necklace $N$ is complement critical if and only if one of the following three conditions is true:
\begin{enumerate}
\item $N$ is vector critical. 
\item $N$ has $\overline{L_4}$ as an induced subgraph; every induced copy of $\overline{L_4}$ is built on the same 4-cycle; 
and every vertex which lies on a cycle other than this 4-cycle is adjacent to at least three vertices of degree greater than 1.
\item $N=S_n^3$ (See Figure \ref{Sn3}.)
\end{enumerate}
\end{proposition}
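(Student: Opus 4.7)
The plan is to mirror the argument of Proposition \ref{U Comp. Critical}, first characterizing when removing a vertex strictly decreases $\mvr(N)$ and then analyzing when it strictly decreases $\mvr(\overline{N})$. The first step is a uniform formula for $\mvr(N-v)$: applying Proposition \ref{necklaceformulas} componentwise to $N-v$ gives $\mvr(N-v) = \mvr(N) + 1 - k_2$ when $v$ lies on a cycle and $\mvr(N-v) = \mvr(N) - k_2$ otherwise, where $k_2$ counts the components of $N-v$ of order at least two. An off-cycle vertex automatically has $k_2 \ge 1$, since all cycles of $N$ survive in components of order at least three, so the only vertices $v$ with $\mvr(N-v) = \mvr(N)$ (call them \emph{bad}) are cycle vertices with $k_2 = 1$. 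Because the two cycle-neighbors of $v$ already have degree $\ge 2$, the condition $k_2 = 1$ says exactly that every off-cycle neighbor of $v$ is a pendant, i.e., that $v$ has fewer than three neighbors of degree greater than $1$. This yields the vector-critical characterization.

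For the complement-critical direction, assume $N$ is complement critical but not vector critical and let $v$ be a bad cycle vertex; complement criticality forces $\mvr(\overline{N-v}) < \mvr(\overline{N})$. I split on whether $N$ contains an induced $\overline{L_4}$, reading off $\mvr(\overline{N})$ from Proposition \ref{mvr leq 3}. In Case~A ($N$ has no induced $\overline{L_4}$), the case $N=C_4$ is dismissed directly (removing any vertex gives $P_3$ and neither sum drops), so $\mvr(\overline{N}) = 3$ and the inequality requires $\mvr(\overline{N-v}) \le 2$. Writing $\overline{N-v}$ as the join of the complements of the components of $N-v$, every such component must be a $K_1$, a star, or $C_4$. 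The main component contains the path obtained from $v$'s cycle $C_v$ by removing $v$; the vertex-disjointness of cycles in a necklace rules out closing this path into an induced $C_4$, and any non-star tree also violates the bound, so the main component must be a star. In particular it is acyclic, so any other cycle of $N$ would already sit in this component---forcing $N$ to be unicyclic, and Proposition \ref{U Comp. Critical} then yields $N = S_n^3$, which is condition $3$.

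In Case~B ($N$ contains induced $\overline{L_4}$), one has $\mvr(\overline{N}) = 4$ and the requirement becomes that $N-v$ contain no induced $\overline{L_4}$. The structural key, using that cycles in a necklace are vertex-disjoint, is that every induced $4$-cycle of $N$ is a $4$-cycle block and every induced $\overline{L_4}$ of $N$ consists of such a block together with one of its external neighbors (and conversely every external neighbor of a $4$-cycle block yields one). Since $v$ is bad, $v$ has no neighbor on any cycle of $N$ other than $C_v$, so removing $v$ can only destroy $\overline{L_4}$'s built on $C_v$. Hence every other $4$-cycle block of $N$ has no external neighbor, so $C^* := C_v$ is the unique $4$-cycle underlying any induced $\overline{L_4}$ of $N$. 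Applying the same argument to every bad cycle vertex shows that they all lie on $C^*$, or equivalently that every cycle vertex off $C^*$ is good---this is condition $2$.

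The reverse direction is a case-by-case verification: condition $1$ is immediate; under condition $2$, removing a bad cycle vertex necessarily destroys $C^*$ and hence every induced $\overline{L_4}$ of $N$, while all other vertices strictly reduce $\mvr(N)$; and condition $3$ is checked directly, using that $(S_n^3 - u) = K_2 \cup (n-3) K_1$, whose complement has minimum vector rank $2 < 3 = \mvr(\overline{S_n^3})$. The main obstacle is the structural reduction in Case~A---ruling out $C_4$ (and any non-star tree) as the main component of $N-v$ so that $N$ is forced to be unicyclic, at which point Proposition \ref{U Comp. Critical} does the rest. This step, though short, is exactly where the necklace hypothesis (vertex-disjointness of cycles) does essential work.
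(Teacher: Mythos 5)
Your proof is correct and follows essentially the same route as the paper: the formula $\mvr(N)=|N|-c-1$ applied componentwise to $N-v$ identifies the ``bad'' cycle vertices, and the complement analysis splits on whether $N$ contains an induced $\overline{L_4}$, exactly as in the paper (whose own write-up of the complement half is much terser than yours). One small factual slip: $\overline{L_4}$ is the $4$-cycle with pendant vertices attached to \emph{two adjacent} cycle vertices, not a $4$-cycle plus a single external neighbor --- but the only facts your argument actually uses (every induced $\overline{L_4}$ contains an induced $4$-cycle of $N$ in full, and a bad vertex can occur in one only through its own cycle) remain true, so nothing breaks.
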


\begin{proof}
This is a generalization of Proposition \ref{U Comp. Critical}, although the conclusions are not as simple. As in the unicyclic case,  if $v$ is a vertex which is not on a cycle, then $(N-v) = \bigcup_{i = 1}^k N_i \cup \overline{K_r} $ is the union of $k \ge 1$ proper necklaces plus $r \ge $ isolated vertices.  The total number of cycles is this same as in the original graph. This implies that $\mvr(N-v) = r + \sum_i \mvr(N_i) = r + \sum_i (|N_i| - c_i - 1) = |N| - 1 - c - k < \mvr(N)$ since $k \ge 1$.  

Likewise, if $v$ is on a cycle and has at least three neighbors of degree greater than 1, then $(N-v)$ is a union of $k>1$ proper necklaces plus $r \ge 0$ isolated vertices; and the total number of cycles is now one less. Thus, $\mvr(N-v) = r + \sum_i \mvr(N_i) =   |N|-1 - (c-1) - k <|N| - c- k < \mvr(N)$. 

On the other hand, if $v$ is on a cycle and has only two neighbors of degree greater than 1, then $k = 1$ in the above calculation and we get $\mvr(N-v) = |N|-1 - (c-1) - k = \mvr(N)$. This implies that if $v$ is on a cycle, then $\mvr(N-v) < \mvr(N)$ if and only if $v$ has at least three vertices of degree greater than 1.  This implies that $N$ is vector critical if and only if every vertex that lies on a cycle satisfies the neighbor condition. 

When we look at the complement $\overline{N}$, we see that the proof for unicyclic graphs shows both ways in which $\mvr(\overline{N-v}) < \mvr(\overline{N})$:
\begin{itemize}
\item{} $\mvr(\overline{N}) = 4$ and $\mvr(\overline{N-v}) < 4$, which happens when $\overline{L_4}$ is an induced subgraph of $N$ but not $(N-v)$. 
\item{} $\mvr(\overline{N}) = 3$ and $\mvr(\overline{N-v}) < 3$, which happens when $(N-v)$ is a star. \end{itemize}

This explains why $N$ can only have one copy of $\overline{L_4}$ and limits the possibilities for having  $\mvr(\overline{N-v}) < \mvr(\overline{N})$ to those listed in the proposition. \end{proof}

\subsection{Books}\label{BookSection}
The last type of graph we explore is a book, which is a generalization of the books described in \cite{ZFP}.  A {\bf book} is defined to be a graph with a distinguished edge $e$ such that the intersection of any two induced cycles in $B$ is the edge $e$ and its endpoints. This generalizes the definition in \cite{ZFP}, in which it was assumed that all the cycles were the same size and that every edge of $B$ was on a cycle. Figure \ref{B} shows a book with a 3-cycle, a 5-cycle, and two 4-cycles; it is apparent that such graphs are always planar. Books are outerplanar only when there are at most two induced cycles, but like outerplanar graphs, the chromatic number is at most 3. 

\begin{figure}
\centering
\includegraphics[width=6cm,height=5cm]{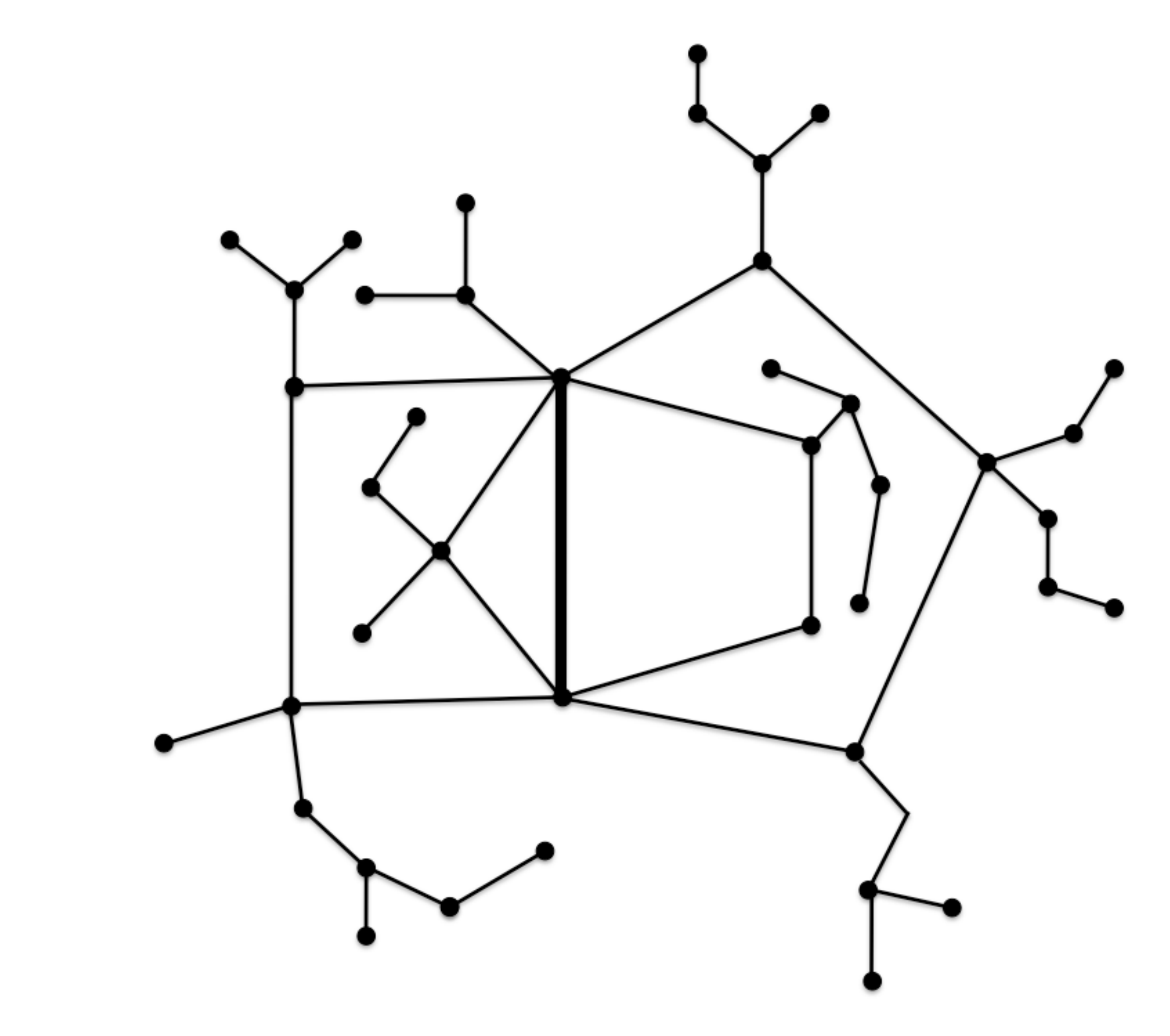}
\caption{A book with four induced cycles}
\label{B}
\end{figure}
We will assume in what follows that $B$ is connected and that it contains more than one cycle. This uniquely defines the edge $e$ and its vertices, which are called {\bf binding vertices} and denoted as $v_b$. This is a different direction in which to generalize unicyclic graphs; but we derive results similar to those in previous sections. 

First, we observe that the proof of Proposition 3.11 in \cite{ZFP} extends to our graphs, as the binding vertices form a positive forcing set for the any book $B$, which implies that $\mvr(B) \ge |B| - 2$. Since $B$ is not a tree, this is also an upper bound and $\mvr(B) = |B|-2$. A description of the minimum vector rank of the complement of a book were given in Proposition \ref{mvr comp. B-v}. Armed with these results, we can give necessary and sufficient conditions for a book to be complement critical: 
\begin{proposition}\label{comp. critical books}
A book $B$ with at least two cycles is complement critical if and only if it satisfies one of the following conditions:
\begin{enumerate}
\item Each binding vertex has at least one neighbor which does not lie on a cycle and which has degree greater than 1. In this case, $B$ is vector-critical. 
\item Either the four-cycle $C_4$ or the kite $\kappa$ is an induced subgraph of $B$. 
\item Every cycle of $B$ is a 3-cycle and every vertex of $B$ is adjacent to at least one of the binding vertices. 
\end{enumerate}
\end{proposition}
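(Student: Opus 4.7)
The plan is to examine, for each vertex $v$ of $B$, whether removing $v$ decreases $\mvr(B)$ or $\mvr(\overline{B})$, using $\mvr(B)=|B|-2$ (noted immediately before the proposition) and Proposition \ref{mvr comp. B-v}. Three types of vertices require separate treatment: non-binding vertices, and each of the two binding vertices $v_b, v_b'$.

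First I would show that removing any non-binding vertex $v$ decreases $\mvr(B)$, so the complement-critical condition is automatic at such vertices. If $v$ lies on no cycle, then $B-v$ consists of a main component that is still a book (or unicyclic graph) retaining all $c \ge 2$ cycles, together with a forest of detached subtrees, giving the bound $\mvr(B-v) \le (|\text{main}| - 2) + |\text{detached}| \le |B|-3$. If $v$ lies on a cycle but is not binding, removing $v$ destroys exactly that cycle, leaving $c-1 \ge 1$ cycles in the main component, with the same bound.

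For a binding vertex $v$, the graph $B-v$ is a forest since every induced cycle of $B$ passes through $v$. Using $\mvr(\text{forest}) = n - k$, where $k$ counts the number of non-trivial components, $\mvr(B-v) < \mvr(B)$ iff $B-v$ has at least two non-trivial components, which happens iff $v$ has a non-cycle neighbor of degree at least $2$ in $B$ --- exactly the local version of condition (1). Hence $B$ is vector critical (and thus complement critical) iff (1) holds at both binding vertices. When (1) fails at some binding vertex $v$, I need $\mvr(\overline{B-v}) < \mvr(\overline{B})$ instead. Since $B-v$ is a disjoint union of trees, the join formula gives $\mvr(\overline{B-v}) = \max_i \mvr(\overline{T_i})$. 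If (2) holds then $\mvr(\overline{B}) = 4$, while each $\mvr(\overline{T_i}) \le 3$ by Proposition \ref{mvr tree comp.}, so the inequality is automatic; this yields the sufficiency of (2).

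If (2) fails then $\mvr(\overline{B}) = 3$, so we need every component of $B-v$ to be a star or $K_1$. Analyzing the main component (which contains the other binding vertex $v'$), a star structure forces every page to be a 3-cycle (so that each interior collapses to a pendant on $v'$), no attached trees at page interiors, and only pendants at $v'$. The main obstacle is the necessity direction here: if (1) fails at $v_b$ but holds at $v_b'$, then the depth-two attachment at $v_b'$ guaranteed by (1) persists in the main component of $B - v_b$, preventing it from being a star and forcing $\mvr(\overline{B-v_b}) = 3$, so complement-criticality fails at $v_b$. Therefore (1) must fail at both binding vertices, and imposing the star condition symmetrically at $v_b$ and $v_b'$ leaves precisely: every cycle is a 3-cycle, no attached trees at page interiors, and each binding vertex has only pendant attachments. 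A short verification shows this is equivalent to condition (3), since any non-binding vertex is then either a page interior of a 3-cycle (adjacent to both binding vertices) or a pendant directly on $v_b$ or $v_b'$.
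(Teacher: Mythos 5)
Your proposal is correct and follows essentially the same route as the paper's proof: reduce to the two binding vertices via the bound $\mvr(B-v)\le |B|-3$ for non-binding $v$, use the forest structure of $B-v_b$ to get condition (1), invoke Proposition \ref{mvr comp. B-v} for condition (2), and characterize the $\mvr(\overline{B-v_b})=2$ case (all components stars) to get condition (3), including the observation that the star requirement at one binding vertex forces it at the other.
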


\begin{proof}  

One simplifying observation if that if $v$ is not one of the binding vertices, then $\mvr(B-v) < \mvr(B)$. This is because the induced subgraph $(B-v)$ has a connected component $B'$ which is still a book. We write $(B-v) = B' \cup F$, where $F$ is the (possibly empty) remainder of the graph (which is a forest) and which has the trivial bound $\mvr(F) \le |F|$. Therefore we have $\mvr(B-v)=\mvr(B')+\mvr(F)  \leq |B'|-2+|F|=|B|-3<\mvr(B)$. 

This means that the complement criticality of a book depends solely on its binding vertices. The removal of a binding vertex $v_b$ will give a forest with $t\ge1$ nontrivial trees and some isolated vertices. This means that $\mvr(B - v_b) = |B|-1 - t < \mvr(B)$ if and only if $t > 1$. Thus, $B$ will be vector-critical if and only if each binding vertex has a neighbor which lies off the cycle and which has degree at least 2. 

Otherwise, we need to ask whether $\mvr(\overline{B - v_b}) < \mvr(\overline{B})$. Since $(B-v_b)$ is a forest, $\mvr(\overline{B-v_b}) \le 3$; thus if $\mvr(\overline{B}) = 4$, we automatically have complement criticality. Using Proposition \ref{mvr comp. B-v}, this gives the second condition. 

The only other possibility would be for $\mvr(\overline{B-v_b}) =2$. This will happen only if each tree in $(B-v_b)$ is a star. In particular, the other binding vertex must be adjacent to every other vertex in its component, which means that each cycle must be a triangle with no extra vertices attached. Also, the remaining binding vertex cannot have any other neighbors of degree greater than 1. This specifically precludes the first condition, which means that if we need $\mvr(\overline{B-v_b}) =2$ for one of the binding vertices, we need it for the other. This inevitably leads us to the 3-cycle graph $B^3_m$ ($m \ge 2$), possibly with leaves attached to the binding vertices, as shown in Figure \ref{Bm3}. \end{proof}
\begin{figure}[hf]
\centering
\includegraphics[width=2.5cm,height=2.5cm]{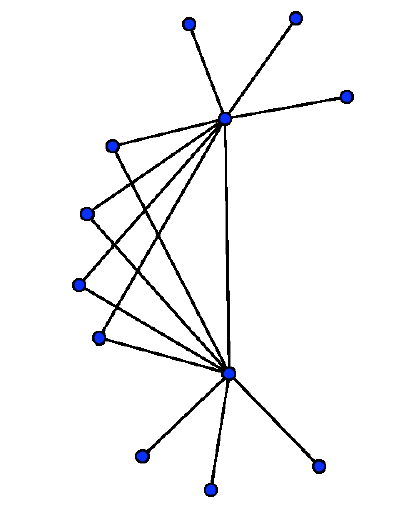}
\caption{$B^3_m$ is a set of $m$ $3$-cycles built on a common edge. We can then attach an arbitrary number of pendant vertices to each binding vertex. The graph shown is built from $B^3_4$.}
%with isolated vertices jointed the two binding vertices, or can be seen as a result from jointing two stars $K_{1,n}$ by adding an edge between two dominating vertices and gluing together several non-dominating vertices.}
\label{Bm3}
\end{figure}
Because books can be treated as gluing multiple unicyclic graphs together by one common edge, it is not surprising that their properties resemble those of unicyclic graphs. Returning to the examples from \cite{ZFP}, we get the following corollary:
\begin{corollary}
The graphs $B^t_m$ described in \cite{ZFP} are complement critical if and only if $t= 3$ or $t = 4$.  
\end{corollary}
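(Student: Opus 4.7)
The plan is to apply Proposition \ref{comp. critical books} to $B^t_m$ (with $m\ge 2$) and, for each value of $t$, determine which of its three sufficient conditions hold. The first observation, valid for every $t$, is that in $B^t_m$ as defined in \cite{ZFP} every vertex lies on one of the $m$ defining cycles; in particular each binding vertex's entire neighborhood is on cycles. Thus condition (1) of Proposition \ref{comp. critical books} never holds for $B^t_m$, and complement criticality must come from condition (2) or (3).

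For $t=3$ every induced cycle of $B^3_m$ is a triangle, and the third vertex of each triangle is adjacent to both binding vertices; so every vertex of $B^3_m$ is adjacent to a binding vertex, condition (3) is satisfied, and $B^3_m$ is complement critical. For $t=4$ each of the $m$ defining cycles is already an induced $C_4$, so condition (2) is satisfied and $B^4_m$ is complement critical.

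For $t\ge 5$ the plan is to rule out both (2) and (3). Condition (3) fails immediately because no cycle of $B^t_m$ is a triangle. For (2) the main step is the structural claim that every induced cycle of $B^t_m$ is one of the $m$ defining $t$-cycles. This rests on the observation that the edges of $B^t_m$ are exactly the binding edge together with the edges of the $m$ cycles, so two non-binding vertices lying on distinct defining cycles are never adjacent. Any induced cycle of $B^t_m$ must therefore either live entirely inside a single $t$-cycle (and then coincide with it) or else contain both binding vertices; in the latter case the binding edge is present as a chord of any closed walk one could form by splicing together paths from two different defining cycles, preventing such a walk from being induced. Hence every induced cycle of $B^t_m$ has length $t\ge 5$, which precludes an induced $C_4$. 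The kite $\kappa$ (Figure \ref{BookComps}) contains an induced cycle of length at most $4$, so it likewise cannot appear as an induced subgraph of $B^t_m$ when $t\ge 5$. Thus condition (2) also fails, and $B^t_m$ is not complement critical.

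The main obstacle is the structural claim in the last paragraph. It is routine once one formalises the point that in a book all between-cycle adjacencies are forced to use the binding edge, but it deserves explicit verification since it is exactly what drives the negative direction of the corollary.
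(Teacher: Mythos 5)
Your proof is correct and follows essentially the same route as the paper: both apply Proposition~\ref{comp. critical books} to $B^t_m$, observing that condition (3) covers $t=3$, condition (2) covers $t=4$, and condition (1) can never hold because every vertex of $B^t_m$ lies on a cycle. Your explicit verification that every induced cycle of $B^t_m$ is one of the $m$ defining $t$-cycles (so that conditions (2) and (3) also fail when $t\ge 5$) is a detail the paper leaves implicit, and it is a worthwhile addition.
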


These graphs consist of $m$ copies of a $t$-cycle glued together along a single edge. There are no vertices which do not lie on a cycle. Looking at the Proposition, we see that the graph $B^3_m$ clearly meets the third condition while $B^4_m$ meets the second. For any other value of $t$, $B^t_m$ is complement critical if and only if it meets the first condition, which it cannot since there are no vertices away from the cycles. Thus, no $B^t_m$ if complement critical if $t>4$. \cvd

\section{Complement Critical Graphs and the Minimum Semidefinite Rank}\label{MRplus}

The notion of a critical graph must always be defined in terms of some graph function; in the previous section, all calculations were done with respect to the minimum vector rank. This brief section is meant to point out that all of our results can be readily adapted using the minimum semidefinite rank ($mr_+$), which is more widely used in the literature. The only significant difference between the two measures is on a single isolated vertex, where $\mvr(K_1) = 1$ but $\mr_+(K_1) = 0$; and the inequality $\mvr(G) \ge \mr_+(G)$ always holds, with equality guaranteed for connected graphs. 

This gives an immediate indication that being complement critical with respect to $
\mvr$ is a stronger one than being complement critical with respect to $\mr_+$: 
\begin{proposition}
Let $G$ be any connected graph such that $\overline{G}$ is also connected. If $G$ is vector-critical with respect to the minimum vector rank, then it is vector-critical with respect to the minimum semidefinite rank.   If $G$ is complement critical with respect to the minimum vector rank, then it is complement critical with respect to the minimum semidefinite rank.  
\end{proposition}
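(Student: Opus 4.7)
The plan is to reduce the proposition to two facts that are already recorded in Observation 2.4: (i) $\mr_+(F) \le \mvr(F)$ for every graph $F$, and (ii) $\mr_+(F) = \mvr(F)$ whenever $F$ has no isolated vertices, in particular whenever $F$ is connected. Both statements are entry-level consequences of the definitions, so they will be cited rather than re-derived.

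For the vector-critical half, I would fix an arbitrary proper induced subgraph $H$ of $G$ and simply chain
$$\mr_+(H) \;\le\; \mvr(H) \;<\; \mvr(G) \;=\; \mr_+(G),$$
where the first step is (i), the strict middle step is the hypothesis that $G$ is vector-critical with respect to $\mvr$, and the last equality is (ii) applied to the connected graph $G$. Notice that the hypothesis that $\overline{G}$ is connected is not actually required for this half.

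For the complement-critical half, I would take any proper induced subgraph $H$ of $G$ and chain
$$\mr_+(H) + \mr_+(\overline{H}) \;\le\; \mvr(H) + \mvr(\overline{H}) \;<\; \mvr(G) + \mvr(\overline{G}) \;=\; \mr_+(G) + \mr_+(\overline{G}).$$
The first inequality is (i) applied twice; the strict inequality is the $\mvr$-complement-criticality of $G$; and the final equality uses (ii) on both $G$ and $\overline{G}$, which is precisely where the assumption that both $G$ and $\overline{G}$ are connected is needed. Since this holds for every proper induced $H$, $G$ is complement critical with respect to $\mr_+$.

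There is no real obstacle here: the argument is just careful bookkeeping of the one-sided inequality $\mr_+ \le \mvr$ versus the two-sided equality that holds in the absence of isolated vertices. The only subtlety worth flagging is that the induced subgraphs $H$ and $\overline{H}$ may well contain isolated vertices (so $\mr_+$ and $\mvr$ can genuinely disagree on them), but because the inequality points in the favorable direction this never threatens the chain above. This also explains, in passing, why the connectedness hypotheses on $G$ and $\overline{G}$ cannot in general be dropped: if $G$ had isolated vertices then $\mr_+(G) < \mvr(G)$, and the equalities anchoring the right-hand side of each chain would fail.
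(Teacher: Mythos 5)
Your proposal is correct and is essentially the paper's own argument: both rest on the inequality $\mr_+ \le \mvr$ applied to the induced subgraphs together with the equality $\mr_+ = \mvr$ on the connected graphs $G$ and $\overline{G}$, chained around the strict inequality supplied by the $\mvr$-criticality hypothesis. The paper states this more tersely (working with a single deleted vertex $v$ rather than a general induced subgraph $H$, which suffices by monotonicity), but the content is identical.
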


The proof is immediate. If $G$ is connected and $\mvr(G-v) < \mvr(G)$, then $\mr_+(G-v) \le \mvr(G-v) < \mvr(G) = \mr_+(G)$; and the same argument can be made for $\overline{G}$. 

In terms of the results for families of complement critical graphs given in Section \ref{Families}, we need no longer be concerned with the appearance of isolated vertices in induced subgraphs of $G$. This simplifies some of the conditions: 

\begin{proposition} Let $G$ be a connected graph which is not complement critical with respect to the minimum vector rank. 
\begin{enumerate}
\item{} If $G$ is a unicyclic graph or a necklace and each vertex on its cycle has degree at least three, then $G$ is vector-critical with respect to the minimum semidefinite rank. 
\item{} If $G$ is a necklace graph which has $\overline{L_4}$ as an induced subgraph such that every induced copy of $\overline{L_4}$ is built on the same 4-cycle; 
and every vertex which lies on a cycle other than this 4-cycle has degree at least 3,  then $G$ is complement critical with respect to the minimum semidefinite rank.
\item{} If $G$ is a book in which each binding vertex has at least one neighbor which does not lie on a cycle, then $G$ is critical with respect to the minimum semidefinite rank.  
\end{enumerate}
\end{proposition}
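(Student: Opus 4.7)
The plan is to prove each part by a direct vertex-deletion analysis, adapting the arguments of Propositions~\ref{U Comp. Critical}, \ref{necklacecc}, and \ref{comp. critical books}. The guiding observation is that for any connected graph $H$ we have $\mr_+(H)=\mvr(H)$, but isolated vertices contribute $0$ to $\mr_+$ while each contributes $1$ to $\mvr$. This is precisely why the hypothesis can be relaxed from ``at least three vertices of degree greater than $1$'' to simply ``degree at least $3$'' (and, in Part~3, from ``off-cycle neighbor of degree greater than $1$'' to simply ``off-cycle neighbor''): pendant neighbors of the removed vertex become isolated in the induced subgraph and no longer inflate its rank.

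For Part~1, I would use additivity of $\mr_+$ across connected components together with $\mr_+(G)=|G|-c-1$ for a connected necklace with $c$ cycles (from Proposition~\ref{necklaceformulas} and the identity $\mr_+=\mvr$ for connected graphs). If $v$ lies off every cycle, deleting $v$ produces a necklace with the same cycle count together with $\deg(v)-1$ detached subtree components; a straightforward sum of component ranks gives $\mr_+(G-v)<\mr_+(G)$. If $v$ lies on a cycle with $\deg(v)\ge 3$, the same calculation applies with one extra cycle destroyed and $\deg(v)-2\ge 1$ further subtrees detached, again producing a strict drop. Part~3 follows the same template with $\mr_+(B)=|B|-2$: removal of a binding vertex $v_b$ destroys every cycle, and the hypothesized off-cycle neighbor of $v_b$ produces at least one additional detached component---trivial or not---yielding the required drop regardless of that neighbor's degree.

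For Part~2, combine the Part~1 analysis with an examination of $\mr_+(\overline{N-v})$. By Proposition~\ref{mvr leq 3}, $\mr_+(\overline{N})=\mvr(\overline{N})=4$ when $\overline{L_4}$ is induced. Under the hypothesis that every induced copy of $\overline{L_4}$ sits on a common $4$-cycle $C$, removing any vertex of $C$ destroys every induced $\overline{L_4}$ and hence drops $\mr_+(\overline{N-v})$ to at most $3$. For a vertex $v$ on a cycle other than $C$, the degree-$3$ condition together with the Part~1 computation gives $\mr_+(N-v)<\mr_+(N)$; and for $v$ off every cycle, Part~1 applies directly. Patching these cases yields complement criticality with respect to $\mr_+$.

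The main technical obstacle is the case bookkeeping in Part~2, since the analysis splits depending on whether the removed vertex lies on $C$, on another cycle, or off all cycles, and each subcase requires its own tally of surviving cycles and components. A secondary point is to verify that $\overline{N}$ is connected so that $\mr_+(\overline{N})=\mvr(\overline{N})$ can be invoked; this is automatic, because the only necklaces with disconnected complement are stars, while a necklace containing an induced $\overline{L_4}$ must contain a $4$-cycle and therefore cannot be a star.
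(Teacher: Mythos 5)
Your proposal is correct and follows essentially the same route as the paper, which gives no detailed proof of this proposition but justifies it solely by the observation that isolated vertices contribute $0$ to $\mr_+$ while contributing $1$ to $\mvr$ --- exactly the engine driving your component-by-component tallies in each part. One small slip worth noting: it is not true that stars are the only necklaces with disconnected complement ($C_3$, $C_4$, and $S_n^3$ are further examples), but the conclusion you actually need --- that a necklace containing an induced $\overline{L_4}$ cannot be a join and has no dominating vertex, so that $\overline{N}$ has no isolated vertices and $\mr_+(\overline{N})=\mvr(\overline{N})=4$ --- still holds.
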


Examples of graphs which are critical and complement-critical with respect to the minimum semidefinite rank but not the minimum vector rank are shown in Figure \ref{msrExamples}. The simplest of these is the $n$-sun (discussed in \cite{AS} \cite{Outerplanar}), which is the cycle $C_n$ with a pendant vertex attached to each vertex on the cycle. This graph is critical since for any vertex on the cycle, $\mr_+(G-v) = (|G| - 2)-1 < \mr_+(G)$. However, since the isolated vertex in $(G-v)$ has minimum vector rank equal to 1, $\mvr(G -v) = \mvr(G)$. The figure also shows examples of graphs which meet conditions (2) and (3) from the Proposition.

\begin{figure}[h] \includegraphics[height = 2.8 cm]{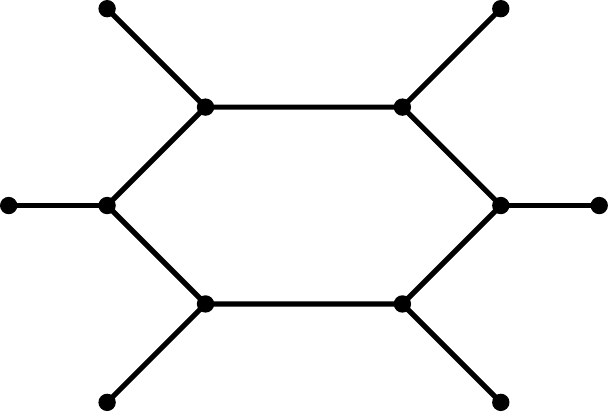}\hfil \includegraphics[height = 3 cm]{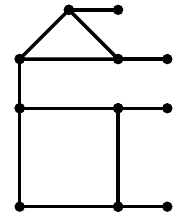} \hfil \includegraphics[height = 3 cm]{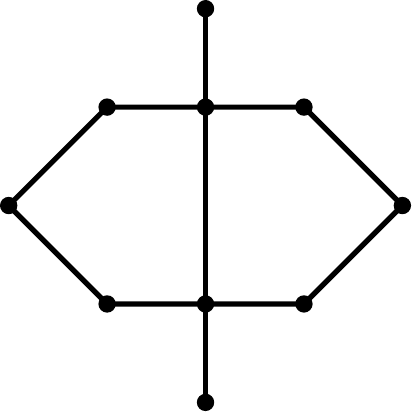}
\caption{Examples of graphs which are complement critical with respect to the minimum semidefinite rank but not the minimum vector rank}\label{msrExamples} 
\end{figure}

\section{Conclusion}

In this paper, we introduce the notion of  complement critical graphs defined with respect to the minimum vector rank. These objects occupy an important place in the set of graphs when partially ordered by induced subgraphs. We showed a link between complement critical graphs and the Graph Complement Conjectures, showing that these graphs are sufficient to prove or disprove the conjecture. We provided a list of general properties of complement graphs and necessary and sufficient conditions for graphs which are built on cycles to be complement critical. In the process, we gave explicit formulas for the complements of certain sparse graphs. We anticipate that this work will be useful in understanding the structure of the set of graphs given by the minimum rank. 

%%%%%%%%%%%%%%%%%%%%%%%%%%%%%%%%%%%%%%%%%%%%%%%%%%%%%%%%%%%%%

\bigskip
{\bf Acknowledgment.} This work was primarily completed during the Saint Mary's College of California School of Science Summer Research Program. The authors gratefully acknowledge the support of this program and its many benefits.

%%%%%%%%%%%%%%%%%%%%%%%%%%%%%%%%%%%%%%%%%%%%%%%%%%%%%%%%%%%%%

\appendix\section{Proofs of Necklace Properties}
\subsection{Proof of Proposition \ref{prop:degree2}}
Assume that there exists a vector labeling  of the vertices of $\overline{H}$ in $\reals^3$ that assigns pairwise linearly independent nonzero vectors to each vertex. Let ${\bf v}$ and ${\bf w}$ be the vectors corresponding to the vertices $v$ and $w$;  let ${\bf {\bf y_1}}$ and ${\bf y_2}$ be the vectors corresponding to the respective neighbors of $v$ and $w$; and let ${\bf y_3}, \ldots {\bf y_k}$ be the vectors corresponding to the remaining vertices of $H$.  This is represented schematically in Figure \ref{fig:degree2}

\begin{figure}[h]
\centering \includegraphics[width=4.5cm]{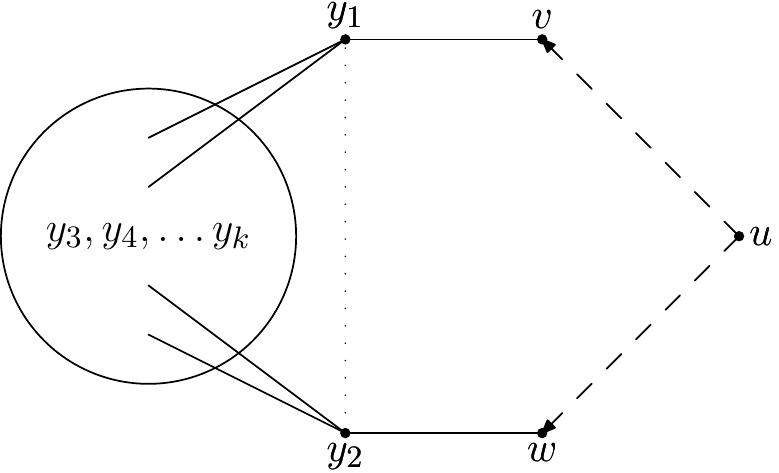}\caption{Schematic Representation of Graph in Lemma \ref{prop:degree2}}\label{fig:degree2}
\end{figure}

It will be convenient to take advantage of being in $\reals^3$ and use the cross product. For any real number $t$, define
\bee
{\bf u}_t := t {\bf v} +{\bf w} \qquad 
{\bf v}_t := {\bf u}_t \times {\bf y_1} \qquad
{\bf w}_t := {\bf u}_t \times {\bf y_2}
\eee

We wish to show that there exists a value of $t \in \reals$ for which $\{{\bf u}_t, {\bf v}_t, {\bf w}_t,  {\bf y_1}, \ldots {\bf y_k}\}$ is an orthogonal representation of $\overline{G}$. 

By construction, for all $t$, \bee \bra {\bf u}_t,{\bf v}_t \ket = \bra {\bf u}_t,{\bf w}_t \ket = \bra {\bf v}_t,{\bf y_1}\ket = \bra {\bf w}_t,{\bf y_2}\ket = 0 \eee
Since these are the only non-neighbors of our vertices $u,v,w$, we require only that no other inner product be zero. Specifically, we need to show that there exists a value of $t$ such that:
\begin{description}
\item{(I)} $\bra {\bf u}_t, {\bf y_i} \ket \ne 0$ for all $i = 1 \ldots k$
\item{(II)} $\bra {\bf v}_t, {\bf y_i} \ket \ne 0$ for $i \ne 1$ and  $\bra {\bf w}_t, {\bf y_i} \ket \ne 0$ for $i \ne 2$
\item{(III)} $\bra {\bf v}_t, {\bf w}_t \ket \ne 0$
\end{description}
These inner products are linear and quadratic functions of $t$. The result is proved as long as none of them is identically zero.

Looking at line (I), $\bra {\bf u}_t, {\bf y_i} \ket$ is identically zero if and only if $\bra {\bf v},{\bf y_i} \ket = \bra  {\bf w}, {\bf y_i} \ket = 0$, which would imply that $v$ and $w$ share a common neighbor in $H$.  In line (II), $\bra {\bf v}_t, {\bf y_i} \ket = t\det({\bf v}, {\bf y_1}, {\bf y_i})  +\det({\bf w}, {\bf y_1}, {\bf y_i}) = 0$ for all $t$ if and only if ${\bf y_i}$ is in both the plane spanned by  ${\bf v}$ and ${\bf y_1}$ {\em and} the plane spanned by ${\bf w}$ and ${\bf y_1}$. This means that ${\bf y_i}$ and ${\bf y_1}$ are dependent, which implies that $i = 1$. The second part of (II) is similarly shown. 

Finally, in (III), we can write the quadratic function $f(t) = \bra {\bf v}_t, {\bf w}_t \ket$. If $f''(0) = 0$, then $\bra {\bf y_1}, {\bf y_2}\ket = 0$ and $f'(0) = \bra {\bf v},{\bf y_2} \ket\bra {\bf w}, {\bf y_1}\ket \ne 0$. Hence $f''(0)$ and $f'(0)$ are not both zero and $f(t)$ cannot be identically zero. 

Since each condition excludes finitely many values of $t$, there exists a value of $t$ which makes $\{{\bf u}_t, {\bf v}_t, {\bf w}_t, {\bf y_1}, \ldots, {\bf y_k}\}$ an orthogonal representation of $\overline{G}$ in$ \reals^3$. 
\cvd

\subsection{Proof of Lemma \ref{3cycleslemma}}
Suppose that we have an orthogonal representation of $H$ in $\reals^3$ in which $w$ is represented by the unit vector ${\bf w}$. Observation 1.3 in \cite{OR} guarantees that there exists a unit vector ${\bf u_0}$ in $\reals^3$ which is orthogonal to ${\bf w}$ but not to any other vector in the representation. Let ${\bf v_0} = {\bf u_0} \times {\bf w}$. As in the previous proof, we define
\bee
{\bf u}_t = {\bf u_0} + t{\bf v_0}  \qquad 
{\bf v}_t = t{\bf u_0} -{\bf v_0} \eee
The vectors ${\bf u}_t, {\bf v}_t$ and ${\bf w}$ are mutually orthogonal for all $t$, and for any other vector ${\bf y}$ in the orthogonal representation, $\bra {\bf y}, {\bf u}_t \ket$ and $\bra {\bf y}, {\bf v}_t\ket$ are not identically zero because $\bra {\bf y}, {\bf u_0}\ket \ne 0$.  Therefore, we can find a value of $t$ for which ${\bf u}_t$ and ${\bf v}_t $ extend the orthogonal representation of $\overline{H}$ to $\overline{G}$. 

\subsection{Proof of Proposition \ref{mvr leq 3}}
Note that if $G$ is a disconnected graph with components $G_i$, then $\mvr(\overline{G}) = \max_i \mvr(\overline{G_i})$ and $\overline{L_4}$ is an induced subgraph of $G$ if and only if it is a subgraph of some $G_i$. So, if the proposition is true for connected graphs, then it must be true for all simple graphs.

We also note that one direction of the proof is clear: Theorem 2.2 of \cite{OR} implies that $\mvr(\overline{G}) \le 4$, and if $\overline{L_4}$ is an induced subgraph of $G$, then $\mvr(\overline{G}) \ge \mvr(L_4) =4$.  The proof of the converse comes from the fact that any such graph $G$ can be built up vertex by vertex in a way that uses the previous two results. 

We proceed by induction on the number of vertices. The proposition evidently holds for $K_2$. Assume now that it holds for all connected graphs with fewer than $n>2$ vertices in which every vertex belongs to at most one cycle. Now, consider such a graph $G$ with $n$ vertices such that $\overline{L_4}$ is not an induced subgraph. Note that every induced subgraph of $G$ also satisfies the hypotheses of the proposition.

If $\overline{G}$ contains a pair of duplicate vertices $u$ and $v$, then let $H = G-v$. Since duplicate vertices don't change the $\mvr$, $\mvr(\overline{G}) = \mvr(\overline{H}) \le 3$ by the inductive hypothesis. 

Now, assume that $\overline{G}$ contains no duplicate vertices. Since $G$ does not have $\overline{L_4}$ as an induced subgraph, this implies that $G$ contains no 4-cycle. 

If $G$ contains a pendant vertex $v$, then Theorem 2.1 in \cite{OR} asserts that $\mvr(\overline{G}) = \mvr(\overline{G-v})$ as long as any two vectors in the representation of $\overline{G-v}$ are linearly independent, which is equivalent to having no duplicate vertices. Thus, $\mvr(\overline{G}) \le 3$ by the inductive hypothesis. 

Assume now that $G$ contains no pendant vertices. Thus, $G$ is simply a bunch of cycles connected by paths. Note that we can associate a tree $T$ with $G$ by collapsing each cycle of $G$ into one vertex of $T$. Because a tree has at least two leaves, we see that $G$ has at least two cycles which are attached to the rest of the graph by a single edge. Let us consider one such cycle $C$ attached to $G$ at $w$. We know that it is not a 4-cycle, since this would include duplicate vertices. If $C$ is a 3-cycle on $(u,v,w)$, let $H = G-u-v$ and apply Lemma \ref{3cycleslemma}.  If $C$ is a cycle of length greater than four, let $u$ be a vertex on $C$ at distance 2 from $w$. Let $H = G-u$ and apply Proposition \ref{prop:degree2}. In either case, we see that $\mvr(\overline{G}) = \mvr(\overline{H}) \le 3$ by the inductive hypothesis. 

Thus, $\mvr(\overline{G})\le 3$ if and only if $\overline{L_4}$ is not an induced subgraph of $G$. \cvd

\end{document}